\def \version {24nd Mar, 2014}
\newtheorem{thm}{Theorem}[section]
\newtheorem{lemma}[thm]{Lemma}
\def \cB {{\cal B}}
\def \cD {{\cal D}}
\def \cE {{\cal E}}
\def \cF {{\cal F}}
\def \cG {{\cal G}}
\def \cH {{\cal H}}
\def \cP {{\cal P}}
\begin{document}
\title{Uniformly resolvable decompositions of $K_v$\\ into paths on two, three and four vertices}

\author {Giovanni Lo Faro
\thanks{Supported  by PRIN, PRA and I.N.D.A.M (G.N.S.A.G.A.), Italy}\\
\small  Dipartimento di Matematica e Informatica \\
\small  Universit\`a di Messina \\
\small  Messina\\
\small Italia\\
{\small \tt lofaro@unime.it}\\
 Salvatore Milici
\thanks{Supported by MIUR and by C. N. R. (G. N. S. A. G. A.), Italy}\\
\small Dipartimento di Matematica e Informatica \\
\small Universit\`a di Catania \\
\small Catania\\
\small Italia\\
{\small \tt milici@dmi.unict.it}  \\
Antoinette Tripodi
\thanks{Supported  by PRIN, PRA and I.N.D.A.M (G.N.S.A.G.A.), Italy}\\
\small  Dipartimento di Matematica e Informatica \\
\small  Universit\`a di Messina \\
\small  Messina\\
\small Italia\\
{\small \tt atripodi@unime.it}}

\date{\small Latest update on \version }
\maketitle

\begin{abstract}
In this paper we consider uniformly resolvable decompositions
of the complete graph $K_v$ into subgraphs such that each
resolution class contains only blocks isomorphic to the same
graph. We completely determine the spectrum for the
case in which all the resolution classes consist of either $P_2$, $P_3$ and $P_4$.
\end{abstract}

\vspace{5 mm}
\noindent AMS classification: $05B05$.\\
Keywords: Resolvable graph decomposition; Uniform resolution;
Paths

\section{Introduction and definitions}\label{introduzione}

Given a collection $\cH$ of graphs, an {\em $\cH$-decomposition\/}
of a graph $G$ is a decomposition of the edge set of $G$ into
subgraphs (called {\em blocks}) isomorphic to some element of $\cH$. Such a decomposition is
said to be {\em resolvable\/} if it is possible to partition the blocks
into  classes $\cP_i$
 (often referred to as {\em parallel classes\/})
 such that every vertex of $G$ appears
in exactly one block of each $\cP_i$. A resolvable $\cH$-decomposition of $G$ is sometimes also
referred to as an {\em $\cH$-fac\-torization\/} of $G$, and a class can
be called an {\em $\cH$-factor\/} of $G$. The case where
 $\cH=\{K_2\}$ (a single edge) is known as a
 {\em 1-factorization\/}; for $G=K_v$
it is well known to exist if and only if $v$ is
even. A single class of a  1-factorization, that is a pairing of
all vertices, is also known as a {\em 1-factor\/}
 or {\em perfect matching}.

In many cases we
wish to place further constraints on the classes. For example, a
class is called {\em uniform} if every block of the class is
isomorphic to the same graph from $\cH$. The
result of Rees \cite{R} which finds necessary and sufficient
conditions for the existence of uniformly resolvable $\{K_2, K_3\}$-decompositions of $K_v$ is of particular note. Uniformly resolvable
decompositions of $K_v$ have also been studied in \cite{URD}, \cite{DQS},
\cite{DLD}, \cite{GM}, \cite{GM1}, \cite{HR}, \cite{KMT}, \cite{KLMT}, \cite{M}, \cite{MT}, \cite{S1}, \cite{S1bis}, \cite{S2}  and \cite{SG}.

If $\cH = \{H_1, H_2, \ldots, H_l\}$, let $(H_1, H_2, \ldots, H_l)$-URD$(v;r_1,r_2, \ldots, r_l)$  denote a uniformly resolvable decomposition of $K_{v}$  into $r_i$ classes containing only copies of the graph $H_i$, for $i=1,2,\ldots,l$. 
In this paper we study the existence of uniformly resolvable decompositions into paths $P_2=K_2$, $P_3$, $P_4$
for the complete graph $K_v$.  The existence of (uniformly) resolvable decompositions for each $\cH \subset\{K_2,P_3,P_4\}$
 was studied separately already long ago:
\begin{itemize}
 \item
There exists a resolvable $K_2$-decomposition of $K_v$ if and only if \ $v \equiv 0\pmod {2}$.
 \item
There exists a resolvable $P_3$-decomposition of $K_v$
 if and only if \ $v \equiv 9\pmod {12}$ \ \cite{H}.
 \item
There exists a resolvable $P_4$-decomposition of $K_v$
 if and only if \ $v \equiv 4\pmod {12}$  \ \cite{BHY}.
\item
There exists a $(K_2, P_3)$-URD$(v;r,s)$  if and only if  $v\equiv 0 \pmod{6}$ and
$(r,s) \in \{(v-1-4x, 3x), x=0,1,\ldots,\frac{v-4}{4}\}$ for $v \equiv0 \pmod{12}$ and $(r,s) \in \{(v-1-4x, 3x), x=0,1,\ldots,\frac{v-2}{4}\}$ for $v \equiv6\pmod{12}$\ \cite{M1,Y}.

\item
There exists a $(K_2, P_4)$-URD$(v;r,s)$  if and only if  $v\equiv 0 \pmod{4}$ and
$(r,s) \in \{(v-1-3x, 2x), x=0,1,\ldots,\frac{v-3}{3}\}$ for $v \equiv0 \pmod{12}$, $(r,s) \in \{(v-1-3x,2x), x=0,1,\ldots,\frac{v-1}{3}\}$ for
$v \equiv4\pmod{12}$ and  $(r,s) \in \{(v-1-3x, 2x), x=0,1,\ldots,\frac{v-2}{3}\}$ for $v\equiv8 \pmod{12}$\ \cite{M1,Y}.
\item
 There exists a$(P_3, P_4)$-URD$(v;r,s)$  if and only if  $v \equiv0 \pmod{12}$ and $(r,s) \in
 \{(6+9x, 2+\frac{2(v-12)}{3}-8x), x=0,1,\ldots,\frac{v-12}{12}\}$ \cite{GM1}.

\end{itemize}

In what follows, we will denote by $[a_1,a_2,\ldots, a_k]$  the path $P_k$, $k\geq 3$, having vertex set $\{a_1,a_2,\ldots, a_k\}$
and edge set $\{\{a_1,a_2\}, \{a_2,a_3\}, \ldots, \{a_{k-1},a_k\}\}$.
Given a graph $G$, we will denote by $G_{(n)}$ the graph on $V(G)\times Z_n$ with edge set $\{\{x_i,y_j\}:\{x,y\}\in \cE (G), i,j\in Z_n\}$. 

\section{Necessary conditions}

In this section we will give necessary conditions for the existence
of a $(P_2, P_3, P_4)$-URD$(v;r,s,t)$. To begin with, note that if there exists a $(P_2, P_3, P_4)$-URD$(v;r,s,t)$ with $s>0$ and $t>0$, then $v\equiv 0\pmod{12}$.

\begin {lemma}
\label{lemmaP1}{\rm \cite{M1,Y}}  There exists a $(P_2, P_3, P_4)$-URD$(v;r,0,t)$ if and only if $v\equiv 0\pmod{4}$ and $(r,0,t)\in \{ (v-1-3x,0,2x),  x=0,1,\ldots,\frac{v-3}{3}\}$ for $v \equiv0 \pmod{12}$, $(r,0,t) \{\in(v-1-3x,0,), x=0,1,\ldots,\frac{v-1}{3}\}$ for
$v \equiv4\pmod{12}$ and  $(r,0,t) \in \{(v-1-3x,0,2x), x=0,1,\ldots,\frac{v-2}{3}\}$ for $v\equiv8 \pmod{12}$.
\end {lemma}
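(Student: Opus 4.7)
The plan is to observe that the statement is essentially a rewording of the existence result for $(K_2, P_4)$-URD$(v; r, s)$ already cited in the introduction from \cite{M1,Y}. Indeed, when $s = 0$ in a $(P_2, P_3, P_4)$-URD$(v; r, 0, t)$, every parallel class is either a $P_2 = K_2$-factor or a $P_4$-factor, so the decomposition is, under the obvious identification, exactly a $(K_2, P_4)$-URD$(v; r, t)$. Conversely, every $(K_2, P_4)$-URD$(v; r, t)$ becomes a $(P_2, P_3, P_4)$-URD$(v; r, 0, t)$. So the plan is simply to transfer the cited characterization through this bijection.

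First I would record the necessary conditions by direct counting. The presence of a $P_4$-factor partitions the vertex set into copies of $P_4$, forcing $v \equiv 0 \pmod{4}$. Counting edges gives $r \cdot v/2 + t \cdot 3v/4 = \binom{v}{2}$, i.e.\ $r + 3t/2 = v - 1$, which forces $t$ to be even; writing $t = 2x$ yields $r = v - 1 - 3x$, matching the form $(v-1-3x, 2x)$ in the cited theorem. The admissible ranges for $x$ split according to $v \pmod{12}$ because the cases $v \equiv 0, 4, 8 \pmod{12}$ give different maximum values of $x$ (respectively $\frac{v-3}{3}$, $\frac{v-1}{3}$, $\frac{v-2}{3}$), reflecting the different structural constraints for the maximum number of $P_4$-factors.

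Second, I would invoke the cited sufficiency: for each admissible triple $(r, 0, t)$ in the three residue classes mod $12$, a $(K_2, P_4)$-URD$(v; r, t)$ exists by \cite{M1,Y}, and this \emph{is} the required $(P_2, P_3, P_4)$-URD$(v; r, 0, t)$.

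The main (and essentially only) obstacle here is conceptual rather than computational: once one recognizes that the $s = 0$ slice of the three-graph problem coincides exactly with the two-graph problem already solved in the literature, the lemma follows as an immediate corollary, with no new construction needed.
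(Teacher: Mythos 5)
Your proposal is correct and matches the paper's treatment: the paper states this lemma with no proof beyond the citation to \cite{M1,Y}, precisely because the $s=0$ slice of the $(P_2,P_3,P_4)$ problem is identical to the $(K_2,P_4)$-URD$(v;r,t)$ result quoted in the introduction. Your identification of the two problems and the accompanying counting argument are exactly the intended justification.
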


\begin{lemma}
\label{lemmaP2}{\rm \cite{M1,Y}} There exists a $(P_2, P_3, P_4)$-URD$(v;r,s,0)$ if and only if\/ $v\equiv 0\pmod{6}$ and \/ $(r,s,0)\in \{(v-1-4x, 3x,0), x=0,1,\ldots,\frac{v-4}{4}\}$ for $v \equiv0 \pmod{12}$ and $(r,s,0) \in \{(v-1-4x, 3x,0), x=0,1,\ldots,\frac{v-2}{4}\}$ for $v \equiv6\pmod{12}$.
\end {lemma}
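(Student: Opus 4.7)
The statement is essentially a restatement: when $t=0$, a $(P_2,P_3,P_4)$-URD$(v;r,s,0)$ is nothing but a $(K_2,P_3)$-URD$(v;r,s)$, since $P_2=K_2$ and there are no $P_4$-classes. Therefore the plan is to simply derive the lemma from the $(K_2,P_3)$-URD existence result already quoted in Section~\ref{introduzione} and due to \cite{M1,Y}.

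For the necessary conditions I would verify them directly as a sanity check. A $P_3$-factor of $K_v$ requires $3\mid v$ (each $P_3$ covers three vertices), and a $P_2$-factor requires $2\mid v$; hence $v\equiv 0\pmod{6}$ as soon as we assume that at least one class of each of the two types is present (the edge case $s=0$ being trivial). Counting edges, a $P_2$-factor contributes $v/2$ edges and a $P_3$-factor contributes $2v/3$ edges, so
\[
r\cdot\frac{v}{2}+s\cdot\frac{2v}{3}=\binom{v}{2},
\]
which after clearing denominators gives $3r+4s=3(v-1)$. Writing $s=3x$ forces $r=v-1-4x$, and the nonnegativity $r\ge 0$ together with the congruence class of $v$ modulo $12$ yields the stated ranges $x\le(v-4)/4$ or $x\le(v-2)/4$.

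For sufficiency I would simply cite the constructions of \cite{M1,Y}, which exhibit a $(K_2,P_3)$-URD$(v;v-1-4x,3x)$ for every admissible $x$ in each residue class. Since no $P_4$-factors need to be produced (we have $t=0$), no additional construction work is required, and there is no real obstacle: the lemma is included in the paper only for convenient reference in the later case analysis, alongside Lemma~\ref{lemmaP1}.
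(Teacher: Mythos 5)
Your proposal is correct and matches the paper exactly: Lemma~\ref{lemmaP2} is stated with the citation {\rm \cite{M1,Y}} and no proof is given, since for $t=0$ the object is literally the $(K_2,P_3)$-URD$(v;r,s)$ already quoted in Section~\ref{introduzione}. Your sanity check of the necessary conditions (the count $3r+4s=3(v-1)$ forcing $s=3x$, $r=v-1-4x$, and the floor of $(v-1)/4$ giving the two stated ranges) is accurate and harmless extra detail.
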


\begin{lemma}
\label{lemmaP3} Let  $v\equiv 2,10\pmod{12}$. A $(P_2, P_3,P_4)$-URD$(v;r,s,t)$ there  exists if and only if $s=t=0$.
\end{lemma}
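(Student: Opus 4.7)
The plan is to treat necessity and sufficiency separately; both directions reduce to elementary divisibility and counting arguments, so there is no serious obstacle here.

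For the necessity direction, I would observe that any parallel class consisting only of copies of $P_k$ partitions the vertex set of $K_v$ into blocks of size $k$, which forces $k\mid v$. Hence the presence of a $P_3$-class forces $3\mid v$ and the presence of a $P_4$-class forces $4\mid v$. For $v\equiv 2,10\pmod{12}$ we have $v\equiv 2\pmod 3$ (in both cases $v=12k+2$ or $v=12k+10$ gives $v\bmod 3\in\{2,1\}$, so $3\nmid v$) and $v\equiv 2\pmod 4$ (so $4\nmid v$). Consequently no $P_3$-class and no $P_4$-class can occur, forcing $s=t=0$.

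For the sufficiency direction, when $s=t=0$ what is required is simply a $(P_2)$-URD of $K_v$, i.e.\ a $1$-factorization. Since $v\equiv 2,10\pmod{12}$ is even, such a $1$-factorization exists, supplying $r=v-1$ classes and giving a $(P_2,P_3,P_4)$-URD$(v;v-1,0,0)$. This completes the biconditional, with the only admissible parameter triple being $(r,s,t)=(v-1,0,0)$.
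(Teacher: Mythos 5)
Your proof is correct and follows essentially the same route as the paper: resolvability of a $P_3$- (resp.\ $P_4$-) class forces $3\mid v$ (resp.\ $4\mid v$), which fails for $v\equiv 2,10\pmod{12}$. You are in fact slightly more complete than the paper, which only argues the necessity direction and leaves the sufficiency (the $1$-factorization of $K_v$ for even $v$) implicit.
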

\begin{proof}

Suppose  there exists a $(P_2, P_3, P_4)$-URD$(v;r,s,t)$, with $(s,t)\neq(0,0)$. By the resolvability $v$ must be divisible by 4,  6 or  12. A contradiction.
\end{proof}

Given $v\equiv 0\pmod{12}$, for every $0\leq u \leq \frac{v-1}{4}$ define $r(v,u)$   according to the following table:

\vspace{4 mm}

 \begin{minipage}[t]{\textwidth}
\begin{center}
\begin{tabular}{|c|c|}
\hline
  $u  $ &  {$r(v,u)$}
\\
\hline
$0 \pmod{3}$ & $\frac{v-3}{3}-\frac{4u}{3} $\\
$1\pmod{3}$ & $\frac{v-6}{3}-\frac{4(u-1)}{3} $\\
$2 \pmod{3}$ & $\frac{v-9}{3}-\frac{4(u-2)}{3} $\\

\hline
 \end{tabular}

\bigskip

Table 1:  $r(v,u)$

 \end{center}\end{minipage}

\vspace{4 mm}

\noindent and let 
\begin{equation}
 D (v) =\bigcup_{i=0}^{\frac{v-4}{4}} D_{i(v)},
 \end{equation}
where \begin{equation}
 D _{i}(v) =\{(v-1-4i-3y,3i,2y), 0\leq y\leq r(v,i)\},
 \end{equation}
 for $i=0,1,\ldots,\frac{v-4}{4}$.

\begin{lemma}
\label{lemmaP0} Let \ $v\equiv 0\pmod{12}$. If there exists a\/ $(P_2, P_3, P_4)$-URD$(v;r,s,t)$ then $(r,s,t)\in D(v)$.
\end{lemma}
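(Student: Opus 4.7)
The plan is to derive the necessary form $(r,s,t)\in D(v)$ purely from the edge-counting identity together with a short divisibility analysis; no combinatorial structure beyond the fact that each resolution class partitions the vertex set is needed. Since $K_v$ has $\binom{v}{2}$ edges, a $P_2$-class is a perfect matching contributing $v/2$ edges, each $P_3$-class consists of $v/3$ paths contributing $2v/3$ edges, and each $P_4$-class consists of $v/4$ paths contributing $3v/4$ edges. Equating the total and clearing denominators (multiplying by $12/v$) gives
\[
6r+8s+9t=6(v-1).
\]

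Reducing this identity modulo $6$ yields $2s+3t\equiv 0\pmod 6$. Since $3t\bmod 6\in\{0,3\}$ and $2s\bmod 6\in\{0,2,4\}$, the only way the two residues can sum to zero is for each to be zero, forcing $t$ even and $s$ divisible by $3$. Writing $s=3i$ and $t=2y$ with integers $i,y\ge 0$ and substituting back yields $r=v-1-4i-3y$.

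It remains to identify the admissible ranges. The constraint $r\ge 0$ is equivalent to $y\le\lfloor(v-1-4i)/3\rfloor$, and a brief case-split on the residue of $i$ modulo $3$ (using $v-1\equiv 2\pmod 3$, since $v\equiv 0\pmod{12}$) verifies that this floor agrees exactly with the function $r(v,i)$ tabulated above: the three cases $i\equiv 0,1,2\pmod 3$ yield residues $v-1-4i\equiv 2,1,0\pmod 3$ respectively, matching the three rows of Table 1. Specializing to $y=0$ gives $4i\le v-1$, and since $v-1\equiv 11\pmod{12}$ this forces $i\le(v-4)/4$. Consequently $(r,s,t)=(v-1-4i-3y,3i,2y)\in D_i(v)\subseteq D(v)$, as required. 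The argument is entirely elementary, with no real obstacle; the only pinch-point is the modular step forcing $3\mid s$ and $2\mid t$ simultaneously, but this falls out immediately once the edge identity is written down.
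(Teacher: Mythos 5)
Your proof is correct and follows essentially the same route as the paper: the edge-counting identity $6r+8s+9t=6(v-1)$, the divisibility conclusions $3\mid s$ and $2\mid t$, and the nonnegativity of $r$ to pin down the ranges. You actually supply more detail than the paper does (the explicit check that $\lfloor (v-1-4i)/3\rfloor$ matches the three rows of Table 1, and that $i\le (v-4)/4$), which the paper leaves as an unverified assertion.
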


\begin{proof}
Assume that there exists a $(P_2, P_3, P_4)$-URD$(v;r,s,t)$ $\cD$, $s>0,t>0$. By the resolvability,

$$\frac{rv}{2}+\frac{2sv}{3}+\frac{3tv}{4}=\frac{v(v-1)}{2}$$
 and hence
\begin{equation}
  6r+8s+9t=6(v-1). \end{equation}
which implies that \
$s\equiv 0\pmod{3}$ \ and \
$t\equiv 0\pmod{2}$.
Let $s=3x$ and $t=2y$;
the equation $(1)$ gives  $r=v-1-4x-3y$.
Since $r$, $s$ and $t$  cannot be negative, the value of
$x$ and $y$ are in the range as given in the definition of $D(v)$.
\end{proof}

Let now
$URD(v; P_2, P_3, P_4)$ := $\{(r,s,t)$ : $\exists$ $(P_2, P_3, P_4)$-URD$(v;r,s,t)\}$. In this paper we completely solve the spectrum problem for such systems, i.e., characterize the existence of uniformly resolvable
decompositions of $K_{v}$ into $r$  1-factors, $s$ classes containing only copies of $P_3$  and $t$
classes containing only copies of $P_4$ by proving the following result:\\

\noindent \textbf{Main Theorem.} { \em
 For every integer\/ $v\equiv 0\pmod{12}$,   $URD(v;K_ 2, P_3, P_4)$=$D(v)$}.

\section{Costructions and related structures}

In this section we will introduce some useful definitions and results and discuss
constructions we will use in proving the main result. For missing
terms or results that are not explicitly explained in the paper,
the reader is referred to \cite{CD} and its online updates.
For some results below, we also cite this handbook instead of the
 original papers.
 
 \bigskip

An incomplete resolvable $(K_{2}, P_3, P_4)$-decomposition of $K_{v+h}$ with a hole of size $h$ is a
$(K_{2}, P_3, P_4)$-decomposition of $K_{v+h}- K_h$ in which there are two types of
classes, {\em partial} classes which cover every point except
those in the hole (the set of points of $K_h$ are referred to as the {\em hole}) and  {\em full} classes which cover every point of $K_{v+h}$.
Specifically a $(K_{2},P_3, P_4)$-IURD$(v+h, h; [r_1, s_1, t_1], [\bar{r}_1, \bar{s}_1, \bar{t}_1])$ is a uniformly
resolvable $(K_{2}, P_3, P_4)-$decomposition of $K_{v+h}-K_h$ with $r_1$ 1-factors,  $s_1$ classes of copies of $P_3$ and $t_1$ classes of copies of $P_4$, which cover only the points not in the hole, $\bar{r}_1$ 1-factors,  $\bar{s}_1$ classes of copies of $P_3$ and $\bar{t}_1$ classes of copies of $P_4$, which cover every point of $K_{v+h}$.

\bigskip

A (resolvable) $\cH$-decomposition of the complete multipartite
graph with $u$ parts each of size $g$  is known as a (resolvable)
group divisible design $\cH$-(R)GDD of type $g^u$ (the parts of size
$g$ are called the \textit{groups} of the design). When $\cH = K_n$ we will
call it an $n$-(R)GDD. A $(P_2, P_3,P_4)$-URGDD $(r,s,t)$ of type $g^u$ is a uniformly resolvable
decomposition of the complete multipartite graph with $u$ parts
each of size $g$ into $r$ 1-factors, $s$ classes containing only copies of  $P_3$ and $t$ classes containing only copies of $P_4$.
If the blocks of an $\cH$-GDD of type ${g^u}$ can be partitioned into partial
parallel classes, each of which contains all points except those of one group,
we refer to the decomposition as a {\em frame}. When $\cH = K_n$ we will
call it an $n$-{\em frame} and it is easy to deduce that the number of partial parallel classes missing a specified group $G$ is
$\frac{|G|}{n-1}$. We quote the following lemma for a later use.


\bigskip

Define $\overline{r}(k,u)$ according to the following table:

\vspace{4 mm}

 \begin{minipage}[t]{\textwidth}
\begin{center}
\begin{tabular}{|c|c|}
\hline
  $u  $ &  {$\overline{r}(k,u)$}
\\
\hline
$0 \pmod{3}$ & $4k-\frac{4u}{3} $\\
$1\pmod{3}$ & $4k-2-\frac{4(u-1)}{3} $\\
$2 \pmod{3}$ & $4k-3-\frac{4(u-2)}{3} $\\

\hline
 \end{tabular}

\bigskip

Table 2: $\overline{r}(v,u)$

 \end{center}\end{minipage}

\vspace{4 mm}

\noindent and let

\begin{equation}
 \bar{D}((12k)^{m}) =\bigcup_{j=0}^{3k(m-1)}\bar{D}_{j}((12k)^{m}),
 \end{equation}
where
\begin{equation}
 \bar{D}_{j}((12k)^{m}) =\{(12k(m-1)-4j-3y,3j,2y), 0\leq y\leq \overline{r}(k(m-1),j)\},\end{equation}
for $j=0,1,\ldots,3k(m-1)$.

\begin{lemma}
\label{lemmaP00} If there exists a\/ $(P_2, P_3, P_4)$-URGDD$(r,s,t)$ of type $12^{k}$, $s>0, t>0$,  then\/ $(r,s,t)\in \bar{D}(12^{k})$.
\end{lemma}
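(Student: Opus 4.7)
The plan is to adapt the straightforward edge-counting argument used for Lemma~\ref{lemmaP0} to the incomplete setting of the multipartite graph $K_{12^{k}}$. First, I would compute the total number of edges in the underlying graph: since there are $k$ groups of size $12$, the complete multipartite graph has $\binom{k}{2}\cdot 12^{2}=72k(k-1)$ edges. Every resolution class is a parallel class on all $12k$ vertices, so a $1$-factor class uses $6k$ edges, a $P_3$-class uses $8k$ edges (it consists of $4k$ copies of $P_3$, each with $2$ edges), and a $P_4$-class uses $9k$ edges (it consists of $3k$ copies of $P_4$, each with $3$ edges). Equating contributions gives
\[
6kr+8ks+9kt=72k(k-1),\qquad\text{i.e.}\qquad 6r+8s+9t=72(k-1).
\]

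The next step is the arithmetic extraction of the shape of $(r,s,t)$. Reducing the displayed equation modulo $3$ forces $2s\equiv 0\pmod 3$, hence $s\equiv 0\pmod 3$; reducing modulo $2$ forces $9t\equiv 0\pmod 2$, hence $t\equiv 0\pmod 2$. Writing $s=3j$ and $t=2y$ with $j,y\ge 0$ and substituting into the equation yields
\[
r=12(k-1)-4j-3y,
\]
which is the first coordinate appearing in the definition of $\bar{D}_{j}(12^{k})$.

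Finally, I would determine the admissible ranges of $j$ and $y$ from the nonnegativity conditions $r,s,t\ge 0$. The conditions $s\ge 0$ and $t\ge 0$ give $j,y\ge 0$, while $r\ge 0$ is equivalent to $4j+3y\le 12(k-1)$. In particular, taking $y=0$ shows $j$ ranges over $\{0,1,\ldots,3(k-1)\}$, matching the index range in formula~(4) (with the overall parameter $k$ of $\bar{D}((12k)^{m})$ specialized to $1$ and $m$ to $k$). For each fixed $j$, the integrality of $y$ in the inequality $3y\le 12(k-1)-4j$ gives $y\le \lfloor(12(k-1)-4j)/3\rfloor$, which I would then show equals $\bar r(k-1,j)$ by a three-case split on $j\bmod 3$.

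The only real obstacle is this last bookkeeping step of matching the floor expression to the piecewise definition of $\bar r$: writing $j=3q$, $j=3q+1$, $j=3q+2$ and computing $\lfloor 4(k-1)-4j/3\rfloor$ directly in each case recovers $4(k-1)-\frac{4j}{3}$, $4(k-1)-2-\frac{4(j-1)}{3}$, and $4(k-1)-3-\frac{4(j-2)}{3}$, respectively, exactly as in Table~2. Combining these observations shows $(r,s,t)\in\bar{D}(12^{k})$, as required.
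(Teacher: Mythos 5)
Your proof is correct and takes essentially the same route as the paper's: both count edges of the complete multipartite graph of type $12^{k}$ to obtain $6kr+8ks+9kt=72k(k-1)$, cancel $k$, and conclude membership in $\bar{D}(12^{k})$. The paper stops at the equation $6r+8s+9t=72(k-1)$ and simply asserts the conclusion, whereas you supply the omitted details (the congruences $s\equiv 0\pmod 3$, $t\equiv 0\pmod 2$, the substitution $s=3j$, $t=2y$, and the three-case check that the floor bound from $r\ge 0$ matches $\overline{r}(k-1,j)$), all of which are accurate.
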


\begin{proof}
Assume  there exists a $(P_2, P_3, P_4)$-URGDD$(r,s,t)$ $\cD$, $s>0,t>0$. By the resolvability of $\cD$ 

$$ 6kr+8ks+9kt=72k(k-1)$$
 and hence
\begin{equation}
  6r+8s+9t=72(k-1). \end{equation}
which  implies that  $(r,s,t)\in \bar{D}(12^{k})$.
\end{proof}
  
 Let $(r_1,s_1, t_1)$ and $(r_2,s_2, t_2)$
be two triples of non-negative integers. Define $(r_1,s_1, t_1) +(r_2,s_2,
t_2)=(r_1+r_2,s_1+s_2, t_1+t_2)$. If $X$ and $Y$ are two sets of triples of
non-negative integers, then $X+Y$ denotes the set $\{(r_1,s_1, t_1)
+(r_2,s_2, t_2) : (r_1,s_1, t_1)\in X, (r_2,s_2, t_2) \in Y \}$. If $X$ is a set
of triples of non-negative integers and $h$ is a positive integer,
then  $h * X$ denotes the set of all triples of non-negative integers
which can be obtained by adding any $h$ elements of $X$ together
(repetitions of elements of $X$ are allowed). To obtain our main result we will use the following lemmas.

\begin{lemma}
\label{lemmaC1} For every $h\geq1$, $h*\bar{D}(12^{2})=\bar{D}((12h)^{2})$.

\end{lemma}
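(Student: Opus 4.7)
The plan is to establish the equality $h\ast\bar{D}(12^2)=\bar{D}((12h)^2)$ by a double inclusion, using only the arithmetic characterization of $\bar{D}((12k)^2)$ that follows from Lemma~\ref{lemmaP00}: a non-negative triple $(r,s,t)$ lies in $\bar{D}((12k)^2)$ if and only if $6r+8s+9t=72k$, $s\equiv 0\pmod 3$, and $t\equiv 0\pmod 2$. Indeed the bound $y\le\overline{r}(k,j)$ in the definition is precisely the condition $r=12k-4j-3y\ge 0$, since a routine check modulo~$3$ gives $\overline{r}(k,j)=\lfloor(12k-4j)/3\rfloor$. Throughout, I write $s=3J$ and $t=2Y$.

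For the inclusion $h\ast\bar{D}(12^2)\subseteq\bar{D}((12h)^2)$: taking any $h$ summands $(r_i,3j_i,2y_i)\in\bar{D}(12^2)$ with $j_i\in\{0,1,2,3\}$ and $0\le y_i\le\overline{r}(1,j_i)$, their sum has the form $(12h-4J-3Y,\,3J,\,2Y)$ with $J=\sum j_i\in[0,3h]$ and $Y=\sum y_i\ge 0$. The equation $6r+8s+9t=72h$ is automatic and $r\ge 0$ because each $r_i\ge 0$, placing the sum in $\bar{D}((12h)^2)$ by the characterization above.

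For the reverse inclusion: given $(r,s,t)\in\bar{D}((12h)^2)$, write $s=3J$ with $0\le J\le 3h$ and $t=2Y$ with $0\le Y\le\overline{r}(h,J)$. Split $J=3a_0+a_1$ with $a_1\in\{0,1,2\}$; the inequality $J\le 3h$ forces $a_0\le h$, and $a_0\le h-1$ whenever $a_1>0$. Choose the canonical decomposition that sets $a_0$ of the $j_i$ equal to $3$, one equal to $a_1$ when $a_1>0$, and the remaining $j_i=0$, so that $\sum j_i=J$. Using the values $\overline{r}(1,0)=4$, $\overline{r}(1,1)=2$, $\overline{r}(1,2)=1$, $\overline{r}(1,3)=0$, a case analysis on $J\bmod 3$ gives the key identity
\[
\sum_{i=1}^{h}\overline{r}(1,j_i)=\overline{r}(h,J).
\]
Since $0\le Y\le\overline{r}(h,J)$ I can then distribute $Y$ by choosing integers $0\le y_i\le\overline{r}(1,j_i)$ with $\sum y_i=Y$; the $h$ triples $(12-4j_i-3y_i,\,3j_i,\,2y_i)$ each lie in $\bar{D}(12^2)$ by construction, and they sum to $(r,s,t)$.

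The only non-routine step is verifying the identity $\sum_{i}\overline{r}(1,j_i)=\overline{r}(h,J)$ for the canonical decomposition; this is the main (and minor) obstacle, essentially a three-case arithmetic check matching Table~2 term-by-term. Once this identity is established, both inclusions reduce to direct applications of Lemma~\ref{lemmaP00} and the explicit definitions of the sets involved.
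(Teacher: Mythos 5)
Your proof is correct, and it takes a genuinely different route from the paper's. The paper argues by induction on $h$: it writes $(h+1)\ast\bar{D}(12^{2})=h\ast\bar{D}(12^{2})+\bar{D}(12^{2})$, applies the inductive hypothesis, and then reduces the resulting sumset to the componentwise identities $\bar{D}_{j}((12h)^{2})+\bar{D}_0(12^{2})=\bar{D}_{j}((12h+12)^{2})$ and $\bar{D}_{i}(12^{2})+\bar{D}_{3h}((12h)^{2})=\bar{D}_{i+3h}((12h+12)^{2})$, which are left as ``easy to check.'' You instead prove both inclusions directly for arbitrary $h$: the forward one via the observation that $\bar{D}((12k)^{2})$ is exactly the set of non-negative triples with $s\equiv 0\pmod 3$, $t\equiv 0\pmod 2$ and $6r+8s+9t=72k$ (your identity $\overline{r}(k,j)=\lfloor(12k-4j)/3\rfloor$ is right and makes the bound in Table~2 transparent), and the reverse one via the canonical decomposition $J=3a_0+a_1$ together with the additivity $\sum_i\overline{r}(1,j_i)=\overline{r}(h,J)$, which checks out in all three residue classes ($4h-4a_0$, $4h-4a_0-2$, $4h-4a_0-3$ respectively), after which distributing $Y\le\sum_i\overline{r}(1,j_i)$ among the summands is routine. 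Your version is arguably more complete, since it replaces the paper's unverified sumset identities with an explicit arithmetic argument; the paper's induction buys brevity and componentwise identities of the same flavor as those reused later in Lemmas~\ref{lemmaD36}, \ref{lemmaC2} and \ref{lemmaF2}. One small attribution slip: the arithmetic characterization you invoke follows from the definition of $\bar{D}$ and Table~2, not from Lemma~\ref{lemmaP00} (which goes the other way, from existence of a design to membership in $\bar{D}$); since you in fact justify it from the definition, nothing substantive is missing.
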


\begin{proof}
We induct on $h$. The case $h=1$ is trivially true. Suppose the assertion holds for $h>1$ and prove it for $h+1$.
We have
$$ (h+1)\ast \bar{D}(12^{2})=h\ast \bar{D}(12^{2})+\bar{D}(12^{2}).$$
By induction hypothesis 
$$ h\ast \bar{D}(12^{2})=\bar{D}((12h)^{2})=\bar{D}_0((12h)^{2}) \cup \bar{D}_1((12h)^{2})\cup \ldots\cup \bar{D}_{3h}((12h)^{2}).$$
Since
$$ \bar{D}(12^{2})=\bar{D}_0(12^{2}) \cup \bar{D}_1(12^{2})\cup \bar{D}_2(12^{2})\cup \bar{D}_{3}(12^{2}),$$
it is easy to chek that

\bigskip

$ h\ast \bar{D}(12^{2})+\bar{D}(12^{2})=$
\bigskip

$=\left\{\bigcup_{j=0}^{3h}\left[\bar{D}_{j}\left(\left(12h\right)^{2}\right)+\bar{D}_0\left(12^{2}\right)\right]
\right\}\bigcup\left\{\bigcup_{i=0}^{3}\left[\bar{D}_{i}\left(12^{2}\right)+\bar{D}_{3h}\left(\left(12h\right)^{2}\right)
\right]\right\}.$

\bigskip

\noindent Now, as  it is easy to see, for each $j=0,1,\ldots,3h$
$$ \bar{D}_{j}((12h)^{2})+\bar{D}_0(12^{2})=\bar{D}_{j}((12h+12)^{2}) $$
and, for each $i=0,1,2,3$,
$$ \bar{D}_{i}(12^{2})+\bar{D}_{3h}((12h)^{2})=\bar{D}_{i+3h}((12h+12)^{2}), $$
and so we obtain

\bigskip

$\ \ \ \ \  h\ast \bar{D}(12^{2})+\bar{D}(12^{2})=\bigcup_{j=0}^{3+3h}\bar{D}_{j}((12h+12)^{2})=\bar{D}((12(h+1))^{2}).$
\end{proof}

\begin{lemma}
\label{lemmaF1} For every $h\geq1$, $h*\bar{D}(12^{2})=\bar{D}(12^{h+1})$.
\end{lemma}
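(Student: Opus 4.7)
The plan is to reduce this lemma directly to Lemma~\ref{lemmaC1} via a purely set-theoretic identity. By Lemma~\ref{lemmaC1} we already know that $h\ast\bar{D}(12^{2})=\bar{D}((12h)^{2})$, so it suffices to verify
\[
\bar{D}\bigl((12h)^{2}\bigr)=\bar{D}\bigl(12^{h+1}\bigr).
\]

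To see this identity, I would inspect the defining equations~(4) and~(5) carefully. The upper index of the union in (4) is $3k(m-1)$, and inside $\bar{D}_{j}((12k)^{m})$ the parameters $k,m$ appear only through the quantity $k(m-1)$: once in the first coordinate $12k(m-1)-4j-3y$ and once inside $\bar{r}(k(m-1),j)$. Thus the whole set $\bar{D}((12k)^{m})$ depends on $(k,m)$ only through the product $k(m-1)$. Specialising to $(k,m)=(h,2)$ and $(k,m)=(1,h+1)$ we obtain $k(m-1)=h$ in both cases, hence the two sets coincide term by term, and the lemma follows by chaining this equality with Lemma~\ref{lemmaC1}.

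The main obstacle is not combinatorial but merely notational: one has to be sure that the auxiliary function $\bar{r}$, defined by Table~2, does not secretly depend on the factorisation of its first argument. Since the table prescribes $\bar{r}(k,u)$ purely in terms of the two numbers $k$ and the residue of $u$ modulo $3$, this concern is immediately dispelled, and the lemma reduces to a formal reindexing complementing Lemma~\ref{lemmaC1}. An alternative, more laborious route would be to repeat the induction argument of Lemma~\ref{lemmaC1} verbatim, tracking the index $m$ rather than $k$; but doing so merely replays the same calculation under a different parametrisation, which is precisely the observation above.
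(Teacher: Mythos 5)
Your proof is correct and follows exactly the paper's route: the paper also deduces the lemma from Lemma~\ref{lemmaC1} via the identity $\bar{D}((12h)^{2})=\bar{D}(12^{h+1})$, which it declares ``easy to check'' and which you justify by observing that $\bar{D}((12k)^{m})$ depends on $(k,m)$ only through $k(m-1)$. No further comment is needed.
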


\begin{proof}
It is easy to check that $\bar{D}((12h)^{2})=\bar{D}(12^{h+1})$ and so the assertion holds by Lemma \ref{lemmaC1}.\end{proof}

\begin{thm}
\label{thmR2} Let\/ $v$, $g$, $t$, $k$  and\/ $u$   be non-negative
integers such that\/ $v=gtu$. If there exists
\begin{itemize}
\item
[$(1)$] a\/ $k$-RGDD of type\/ $g^{u}$;

\item
[$(2)$] a $(P_2,P_3, P_4)$-URGDD$(r_1,s_1,t_1)$ of\/
type $t^{k}$ with\/ $(r_1, s_1,t_1)\in J_1$;

\item
[$(3)$] a $(P_2, P_3, P_4)$-URD$(gt;r_2,s_2,t_2)$,\/ with\/ $(r_2, s_2,t_2)\in J_2$;
\end{itemize}
then there exists a\/ $(P_2, P_3, P_4)$-URD$(v;r,s,t)$  for
each\/ $(r,s,t)\in J_2+ h\ast J_1$, where\/ $h=\frac{g(u-1)}{2}$ is the
number of parallel classes of the\/ $k$-RGDD of type\/ $g^{u}$.
\end{thm}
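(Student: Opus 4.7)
The plan is a standard inflation-and-fill (``Wilson-type'') construction. Take the $k$-RGDD of type $g^u$ on a point set $V$ with group set $\{G_1, \dots, G_u\}$, and list its $h$ parallel classes as $\Pi_1, \dots, \Pi_h$. I would work throughout on the point set $V \times \{1, \dots, t\}$, which has cardinality $gtu = v$. The two steps below decompose, respectively, the inter-group edges and the intra-group edges of the $u$-partite graph whose parts are the sets $G_i \times \{1, \dots, t\}$.

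For the inter-group edges, for each $\ell = 1, \dots, h$ I would choose a triple $(r_1^{(\ell)}, s_1^{(\ell)}, t_1^{(\ell)}) \in J_1$ and, on every block $B \in \Pi_\ell$, place a copy of a $(P_2, P_3, P_4)$-URGDD of type $t^k$ on the inflated point set $B \times \{1,\dots,t\}$ with groups $\{b\} \times \{1,\dots,t\}$, $b\in B$, using these common parameters. Each factor of such a URGDD is a parallel class on the $kt$ points $B \times \{1,\dots,t\}$; and since $\Pi_\ell$ partitions $V$, aligning same-type factors across the blocks of $\Pi_\ell$ (1-factor with 1-factor, $P_3$-class with $P_3$-class, $P_4$-class with $P_4$-class) produces $r_1^{(\ell)}$ 1-factors, $s_1^{(\ell)}$ $P_3$-classes, and $t_1^{(\ell)}$ $P_4$-classes on the whole of $V \times \{1,\dots,t\}$. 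As $\ell$ varies, the RGDD property guarantees that every inter-group edge of the $u$-partite graph is covered exactly once.

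For the intra-group edges, I would fix a triple $(r_2, s_2, t_2) \in J_2$ and place a $(P_2, P_3, P_4)$-URD$(gt; r_2, s_2, t_2)$ on each $G_i \times \{1, \dots, t\}$. Because the same triple is used on every $G_i$, the 1-factors, $P_3$-classes, and $P_4$-classes of the $u$ decompositions can again be aligned across the $u$ groups to give $r_2$ 1-factors, $s_2$ $P_3$-classes, and $t_2$ $P_4$-classes on $V \times \{1, \dots, t\}$, together covering exactly the intra-group edges.

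Combining the two ingredients yields a $(P_2, P_3, P_4)$-URD$(v; r, s, t)$ with
\[
r = r_2 + \sum_{\ell=1}^{h} r_1^{(\ell)}, \qquad s = s_2 + \sum_{\ell=1}^{h} s_1^{(\ell)}, \qquad t = t_2 + \sum_{\ell=1}^{h} t_1^{(\ell)},
\]
and varying $(r_2, s_2, t_2)$ over $J_2$ and the $h$ triples over $J_1$ independently realises every element of $J_2 + h * J_1$. The only step that requires care is the alignment of same-type factors across the blocks of a common $\Pi_\ell$ and across the $u$ groups in the filling stage; this is automatic, since the triples on the families to be combined are by construction identical, so there is no deeper obstacle than keeping the bookkeeping straight.
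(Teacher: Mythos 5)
Your proposal is correct and is essentially the same expand-and-fill construction as the paper's proof: inflate each point $t$ times, place aligned copies of the type-$t^k$ URGDD on the blocks of each parallel class of the $k$-RGDD (allowing a different triple from $J_1$ per class), and fill the inflated groups with aligned copies of the URD$(gt)$ using a common triple from $J_2$. (Incidentally, your count of $h$ parallel classes matches the paper's proof, which uses $\frac{g(u-1)}{k-1}$; the value $\frac{g(u-1)}{2}$ in the theorem statement is a slip that only agrees with this when $k=3$.)
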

\begin{proof}
Let $\cG$ be a $k$-RGDD of type $g^{u}$,
with $u$ groups $G_i$, $i=1,2,\ldots ,u$, of size $g$; let $R_1,R_2,\ldots,R_{\frac{g(u-1)}{k-1}}$,  be the
parallel classes of this $k$-RGDD. Expand each point $t$ times and for each block $b$ of a given resolution class
of $\cG$ place on $b\times\{1,2,\ldots,t\}$ a copy of a $(P_2, P_3, P_4)$-URGDD$(r_1,s_1,t_1)$  of\/
type $t^{k}$ with $(r_1, s_1,t_1)\in J_1$. For each
$i=1,2,\ldots,u$, place on $G_i\times\{1,2,\ldots,t\}$ a copy of a
$(P_2, P_3, P_4)$-URD$(gt;r_2,s_2,t_2)$
with\/ $(r_2, s_2,t_2)\in J_2$. The result is a $(P_2, P_3, P_4)$-URD$(v;r,s,t)$ with $(r,s,t)\in \{J_2+ ({\frac{g(u-1)}{k-1}})\ast
J_1$\}.
\end{proof}

\begin{thm}
\label{thmR3} Let $v$, $g$, $t$, $h$ and $u$  be non-negative integers
such that $v=gtu+h$. If there exists
\begin{itemize}
\item
[$(1)$] a $2$-frame $\cF$ of type $g^{u}$;

\item
[$(2)$] a $(K_{2}, P_3, P_4)$-URD$(h;r_1,s_1,t_1)$  with $(r_1,
s_1,t_1)\in J_1$;

\item
[$(3)$] a $(K_{2}, P_3, P_4)$-URGDD$(r_2,s_2, t_2)$ of type $t^{2}$ with $(r_2,
s_2,t_2)\in J_2$;

\item
[$(4)$] a $(K_{2}, P_3, P_4)$-IURD$(gt+h,h; [r_1, s_1,t_1], [r_3,s_3,t_3])$  with
$(r_1, s_1,t_1)\in J_1 $ and $(r_3,s_3,t_3)\in J_3$= $g\ast J_2$;
\end{itemize}
then exists a $(K_{2}, P_3, P_4)$-URD$(v+h;r,s,t)$  for each
$(r,s,t)\in J_1+ u\ast J_3$.
\end{thm}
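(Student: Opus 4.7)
The plan is a standard ``filling in groups'' construction built on the $2$-frame of hypothesis $(1)$. Write the points of $\cF$ as $V_1 \cup \cdots \cup V_u$ with $|V_i| = g$, blow up each point $t$-fold, and adjoin $h$ further points forming a hole $H$; this gives $gtu + h$ vertices in total. The edges of the complete graph on this vertex set split into three disjoint pieces: (a) edges between distinct blown-up groups, (b) edges inside some $(V_i \times \{1,\ldots,t\}) \cup H$ that touch $V_i \times \{1,\ldots,t\}$, and (c) edges inside $H$. I will decompose piece (a) using the blown-up frame together with hypothesis $(3)$, piece (b) using $u$ copies of the IURD of hypothesis $(4)$, and piece (c) using the URD of hypothesis $(2)$.

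For piece (a), recall that a $2$-frame of type $g^u$ has $g/(2-1)=g$ partial parallel classes missing each group. For each $i$, fix a target triple $(r_3^{(i)}, s_3^{(i)}, t_3^{(i)}) \in g \ast J_2 = J_3$, expressed as a sum of $g$ triples $(r_2^{(i,j)}, s_2^{(i,j)}, t_2^{(i,j)}) \in J_2$, and assign the $j$-th summand to the $j$-th partial parallel class $R_{i,j}$ of $\cF$ missing $V_i$: on every $K_{t,t}$ obtained by blowing up a matching edge of $R_{i,j}$, place a URGDD of type $t^2$ with these parameters. Within a single $R_{i,j}$, the $K_{t,t}$'s are vertex-disjoint, so picking one class of a fixed uniform type from each $K_{t,t}$ assembles a partial parallel class of the overall design missing $V_i \times \{1,\ldots,t\}$ and $H$. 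Summing over $j = 1,\ldots,g$ yields, for each $i$, exactly $r_3^{(i)}$ partial $1$-factors, $s_3^{(i)}$ partial $P_3$-classes and $t_3^{(i)}$ partial $P_4$-classes of this shape.

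Now fix $(r_1, s_1, t_1) \in J_1$. On each $(V_i \times \{1,\ldots,t\}) \cup H$ place the IURD of hypothesis $(4)$ with partial parameters $(r_1, s_1, t_1)$ and full parameters $(r_3^{(i)}, s_3^{(i)}, t_3^{(i)})$, and on $H$ place the URD of hypothesis $(2)$. Pair each of the $r_3^{(i)}$ full $1$-factors of the $i$-th IURD (covering $V_i \times \{1,\ldots,t\} \cup H$) with one of the $r_3^{(i)}$ frame-based partial $1$-factors missing $V_i$; the union is a $1$-factor of $K_{gtu+h}$. Doing the same for $P_3$- and $P_4$-classes produces $\sum_i (r_3^{(i)}, s_3^{(i)}, t_3^{(i)}) \in u \ast J_3$ full parallel classes. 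Finally, for each uniform type, glue one IURD partial class from each of the $u$ groups (this gives a class covering every blown-up group and missing only $H$) with the corresponding class of the URD on $H$; this contributes another $(r_1, s_1, t_1) \in J_1$ full parallel classes, so the overall parameter triple lies in $J_1 + u \ast J_3$.

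The main difficulty is the bookkeeping in the second step: for each $i$, the URGDD parameters $(r_2^{(i,j)}, s_2^{(i,j)}, t_2^{(i,j)})$ must be chosen so that $\sum_{j=1}^{g} (r_2^{(i,j)}, s_2^{(i,j)}, t_2^{(i,j)})$ equals exactly the full-class parameters $(r_3^{(i)}, s_3^{(i)}, t_3^{(i)})$ of the $i$-th IURD, which is precisely what $J_3 = g \ast J_2$ guarantees. Once this matching is arranged, resolvability is forced by requiring each combined class to be uniform in block type, and the fact that every edge of $K_{gtu+h}$ is covered exactly once is immediate from the disjoint decomposition (a)--(c) of the edge set.
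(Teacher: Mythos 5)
Your proposal is correct and follows essentially the same construction as the paper: blow up the $2$-frame $t$-fold, place type-$t^2$ URGDDs on the blown-up frame blocks, use the IURD on each group-plus-hole, fill the hole with the URD$(h)$, and combine full classes with frame-based partial classes (giving $u\ast J_3$) and partial classes across groups with the hole's classes (giving $J_1$). Your explicit bookkeeping of how the $g$ summands from $J_2$ are distributed over the $g$ partial parallel classes missing each group is a slightly more careful rendering of what the paper leaves implicit.
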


\begin{proof}

Let $\cF$ be a $2$-frame of type $g^{u}$ with groups $G_i$, $i=1,2,\ldots,u$; expand each point $t$ times and add a set $H=\{a_1,a_2,\ldots,a_h\}$.
For $j=1,2,\ldots,g$,  let $p_{i,j}$ be the $j$-th partial parallel class which miss the group $G_i$;  for each $b\in p_{i,j}$, place on $b\times\{1,2,\ldots,t\}$ a copy $D_{i,j}^b$  of a $(K_{2}, P_3, P_4)$-URGDD$(r_2,s_2,t_2)$ of type $t^2$,  with $(r_2,s_2,t_2)\in J_2$;   place on $H\cup (G_i\times\{1,2,\ldots,t\})$ a copy $D_i$ of a $(K_{2}, P_3,P_4)$-IURD$(gt+h,h;[r_1, s_1,t_1], [r_3,s_3,t_3])$   with $H$ as hole, $(r_1,s_1,t_1)\in J_1$ and $(r_3,s_3,t_3)\in J_3$= $g\ast J_2$. Now  combine all together the  parallel classes of $D_{i,j}^b$, $b\in p_{i,j}$,
 along with the full classes of   $D_i$ so to obtain $r_3$ 1-factors,   
 $s_3$ classes of paths $P_3$ and $t_3$ classes of paths $P_4$, $(r_3,s_3,t_3)\in J_3$, on $ H\cup (\cup_{i=1}^{u}G_i\times\{1,2,\ldots,t\})$.
Fill the hole $H$ with a copy $D$ of
$(K_{2}, P_3,P_4)$-URD$(h;r_1,s_1,t_1)$  with $(r_1,s_1,t_1)\in J_1$ and
combine the classes of $D$ with the partial classes of $D_i$  so to obtain
$r_1$ 1-factors, $s_1$ classes of paths $P_3$ and $t_1$ classes of paths $P_4$ on $ H\cup (\cup_{i=1}^{u}G_i\times\{1,2,\ldots,t\})$.
The result is a $(K_{2}, P_3, P_4)$-URD$(v+h;r,s,t)$  for each $(r,s,t)\in J_1+ u\ast J_3$. \end{proof}


\section{Small cases}

\begin{lemma}
\label{lemmaA1} There exists a \/ $(P_2, P_3, P_{4})$-URGDD$(r,s,t)$ of type\/ $6^{2}$,
for every $(r,s,t)\in \{(6,0,0), (3,0,2), (0,0,4), (2,3,0)\}$.
\end{lemma}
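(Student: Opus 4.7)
My plan is to prove this lemma by direct construction, exhibiting for each of the four triples $(r,s,t)$ an explicit uniformly resolvable decomposition of $K_{6,6}$ (the underlying graph of a GDD of type $6^2$) into the specified classes. I take the vertex set to be $V=A\cup B$ with groups $A=\{0,1,2,3,4,5\}$ and $B=\{0',1',2',3',4',5'\}$. Since $|E(K_{6,6})|=36$, a quick edge-count confirms each triple is plausible: $6\cdot 6=36$, $3\cdot 6+2\cdot 9=36$, $4\cdot 9=36$, and $2\cdot 6+3\cdot 8=36$.

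For the triple $(6,0,0)$ I invoke the standard $1$-factorization of $K_{n,n}$: the perfect matchings $F_i=\{\{j,(j+i)'\}:j\in Z_6\}$ for $i\in Z_6$ partition the edge set into six $1$-factors, giving a $(P_2,P_3,P_4)$-URGDD$(6,0,0)$ of type $6^2$. For the remaining three triples, I would present each parallel class by explicitly listing the $3$ copies of $P_4$ or $4$ copies of $P_3$ (plus the required $1$-factors) that constitute it. A useful observation is that any $P_4=[a_1,a_2,a_3,a_4]$ embedded in $K_{6,6}$ must use $2$ vertices from each group, and any $P_3=[a_1,a_2,a_3]$ must use two vertices from one group and one from the other; combined with resolvability this pins down the multiset of vertex uses in each class.

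For each candidate decomposition I would then verify two things: (a) each class is a parallel class, i.e.\ every vertex of $V$ appears in exactly one block; and (b) the union of all blocks over all classes covers each edge of $K_{6,6}$ exactly once. Both checks are finite and routine. For example, for $(0,0,4)$ I can look for four parallel classes of $P_4$s by starting from a base class like $\{[0,0',1,1'],[2,2',3,3'],[4,4',5,5']\}$ and developing it under an appropriate shift action on $Z_6$; for $(2,3,0)$ I first fix two convenient $1$-factors (e.g.\ $F_0$ and $F_3$) and then decompose the remaining $24$ edges into three parallel classes of $P_3$s, each containing two $P_3$s centered in $A$ and two centered in $B$; and $(3,0,2)$ can be obtained by combining ideas from the previous constructions, using three of the $F_i$ and partitioning the remaining $18$ edges into two $P_4$-classes.

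The main obstacle is purely combinatorial bookkeeping: guessing concrete parallel classes that simultaneously form partitions of $V$ and exhaust the edge set without overlap. Once the blocks are written down, verification is mechanical. No new theory is required, and the resulting four explicit designs will serve as base cases for the recursive constructions of Theorems \ref{thmR2} and \ref{thmR3}.
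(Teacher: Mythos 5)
Your setup is sound: the edge counts, the observation that a $P_4$ in $K_{6,6}$ meets each group in two vertices while a $P_3$ has its centre in one group and both endpoints in the other, and the explicit $1$-factorization giving $(6,0,0)$ are all correct. But as written the proposal only \emph{proves} the case $(6,0,0)$. For $(3,0,2)$, $(0,0,4)$ and $(2,3,0)$ you describe what a proof would look like ("I would present each parallel class by explicitly listing\dots") without producing the blocks, and for a lemma whose entire content is the existence of four finite objects, not exhibiting three of them is a genuine gap, not bookkeeping. Worse, the one concrete recipe you do offer fails: the base class $\{[0,0',1,1'],[2,2',3,3'],[4,4',5,5']\}$ uses all six edges of the form $\{j,j'\}$, so no translate of it under $Z_6$ (acting on both sides, or on one side only) can be edge-disjoint from it; a short difference count shows that four classes each consisting of three translates of a path $[j,(j+a)',j+b,(j+c)']$ with two equal and one distinct difference can never partition the six edges of each of the six differences. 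So the $(0,0,4)$ case needs either a base class that covers the differences equitably or an ad hoc listing — "an appropriate shift action" does not exist for the class you chose.

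For comparison, the paper disposes of $(6,0,0)$, $(3,0,2)$ and $(0,0,4)$ in one line by observing that they are exactly the $(P_2,P_4)$-URGDDs of type $6^2$ already classified in the cited manuscript of Milici, and only writes out explicit classes for the genuinely new case $(2,3,0)$ (two $1$-factors plus three $P_3$-classes, each with two paths centred in each group, exactly the shape your parity argument predicts). Your all-by-hand route is legitimate and more self-contained, but to complete it you must actually write down and verify the parallel classes for the three missing triples, replacing the flawed development idea for $(0,0,4)$ with a working one.
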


\begin{proof}
The cases $(6,0,0), (3,0,2), (0,0,4)$ correspond to a $(P_2, P_4)$-URGDD$(r_1,s_1)$ of type$6^2$, with $(r_1,s_1)\in \{(6,0), (3,2), (0,4)\}$, which is known to exist \cite{M1}.
 For the case $(2,3,0)$ take the groups to be $\{1,2,3,4,5,6\}, \{a,b,c,d,e,f\}$
and the classes listed below:\\
$\{\{1,e\}, \{2,f\}, \{3,c\}, \{4,d\}, \{5,b\}, \{6,a\}\}$, $\{\{1,d\}, \{2,b\}, \{3,a\}, \{4,e\}, \{5,f\}, \{6,c\}\}$,\\
$\{[a,1,b], [f,4,c], [3,d,5],[2,e,6] \}$, $\{[c,2,d], [a,5,e], [3,b,4],[1,f,6]\}$, $\{[e,3,f], [b,6,d],$ \\$[2,a,4],[1,c,5] \}$.

\end{proof}

\begin{lemma}
\label{lemmaA11} There exists a \/ $(P_2, P_3, P_{4})$-URGDD$(r,s,t)$ of type\/ $4^{3}$,
for every $(r,s,t)\in \{(0,6,0), (4,3,0),(1,3,2)\}$.
\end{lemma}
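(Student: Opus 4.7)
The plan is to exhibit, for each of the three triples, an explicit labelled decomposition of $K_{4,4,4}$ into the prescribed types of parallel classes, exactly as in the proof of Lemma~\ref{lemmaA1}. A preliminary edge count justifies that the three cases are at least numerically consistent: $K_{4,4,4}$ has $48$ edges, and a single parallel class of $P_2$'s, $P_3$'s, or $P_4$'s on $12$ vertices consumes $6$, $8$, or $9$ edges respectively; indeed $6\cdot 8 = 4\cdot 6+3\cdot 8 = 1\cdot 6+3\cdot 8+2\cdot 9 = 48$. Throughout I fix groups $G_1=\{1,2,3,4\}$, $G_2=\{a,b,c,d\}$, $G_3=\{\alpha,\beta,\gamma,\delta\}$, and try to arrange the constructions so that the cyclic rotation permuting the three groups acts on the parallel classes.

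For $(0,6,0)$ the goal is a $P_3$-RGDD of type $4^3$: six parallel classes of four disjoint $P_3$'s each. A convenient route is to develop a single well-chosen base class under the natural $Z_4$-action that simultaneously permutes the four positions inside every group; alternatively one may invoke the standard existence result for resolvable $P_3$-GDDs of type $4^3$ already in the literature.

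For $(4,3,0)$ one writes down four 1-factors and three $P_3$-factors. A natural strategy is to begin with a 1-factorization of $K_{4,4,4}$ (which has exactly eight 1-factors, $K_{4,4,4}$ being $8$-regular) and then repack six of them into three $P_3$-factors, keeping the remaining four as the required 1-factors; for the repacking to succeed one needs pairs $F,F'$ of 1-factors whose union $F\cup F'$ decomposes into four vertex-disjoint copies of $P_3$ covering the twelve vertices, a condition that heavily restricts the admissible pairs but is straightforward to engineer by hand on this small example.

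The genuinely delicate case, and the main obstacle, is $(1,3,2)$: one must simultaneously fit one 1-factor, three $P_3$-factors, and two $P_4$-factors into the $48$ edges of $K_{4,4,4}$. I plan to produce this decomposition by a short direct search, imposing invariance under the $Z_3$-rotation that permutes the three groups cyclically; this symmetry cuts the search space by roughly a factor of three and, as in Lemma~\ref{lemmaA1}, tends to produce a compact, human-verifiable list of base classes. Once the lists are recorded for all three triples, the verification is the routine check that each written class is a partition of the twelve vertices and that each of the $48$ inter-group edges is covered by exactly one block.
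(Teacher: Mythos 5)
There is a genuine gap: for the two nontrivial triples your text is a search plan, not a proof. The statement is an existence claim, and for $(4,3,0)$ and $(1,3,2)$ you never actually exhibit the parallel classes --- you say the repacking ``is straightforward to engineer by hand'' and that you ``plan to produce this decomposition by a short direct search,'' but no decomposition is written down, so nothing is established. (Your treatment of $(0,6,0)$ is fine and coincides with the paper, which simply cites the Ushio--Tsuruno result on $P_3$-factorizations of complete multipartite graphs; the paper handles the other two cases by listing the classes explicitly on the groups $\{1,2,3,4\},\{5,6,7,8\},\{9,10,11,12\}$, which is what your proof is missing.)

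Moreover, the repacking heuristic you propose for $(4,3,0)$ is numerically impossible as stated. The union $F\cup F'$ of two $1$-factors of $K_{4,4,4}$ is a $2$-regular spanning subgraph with $12$ edges, whereas a parallel class of four vertex-disjoint $P_3$'s has only $8$ edges, so no pair of $1$-factors can be ``repacked'' into a single $P_3$-factor. Likewise ``six of them into three $P_3$-factors'' compares $36$ edges with $24$. The correct bookkeeping is $4\cdot 6=3\cdot 8=24$: you would have to keep four of the eight $1$-factors and redecompose the union of the \emph{other four} (a $4$-regular spanning subgraph) into three $P_3$-factors, a genuinely different and less local condition than the pairwise one you impose. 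This is repairable --- the object exists, as the paper's explicit lists show --- but as written both the construction and its guiding arithmetic are incorrect, and the $(1,3,2)$ case remains entirely unproved.
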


\begin{proof}
The case $(0,6,0)$ follows by \cite{UT}. For the remaining cases, take the groups to be $\{1,2,3,4\}, \{5,6,7,8\}, \{9,10,11,12\}$
and the classes listed below:
\begin{itemize}
\item
$(4,3,0)$:\\
$\{\{1,6\}, \{2,5\}, \{3,10\}, \{4,9\}, \{7,12\}, \{8,11\}\}$, \\
$\{\{1,5\}, \{2,6\}, \{3,9\}, \{4,10\}, \{7,11\}, \{8,12\}\}$,\\
$\{\{1,9\}, \{2,10\}, \{3,7\}, \{4,8\}, \{6,12\}, \{5,11\}\}$, \\
$\{\{1,10\}, \{2,9\}, \{3,8\}, \{4,7\}, \{5,12\}, \{6,11\}\}$,\\
$\{[8,1,11],[7,2,12],[4,5,9], [3,6,10]\},\{ [11,3,5],  [12,4,6], [1,7,9], [2,8,10]\}$,\\
$\{[6,9,8],[5,10,7],[2,11,4], [1,12,3]\}$.
\item
$(1,3,2)$: \\
$\{\{1,6\}, \{2,5\}, \{3,10\}, \{4,9\}, \{7,12\}, \{8,11\}\}$,\\
$\{[8,1,11],[7,2,12],[4,5,9], [3,6,10]\},\{ [11,3,5],  [12,4,6], [1,7,9], [2,8,10]\}$,\\
$\{[6,9,8],[5,10,7],[2,11,4], [1,12,3]\}$,\\
$\{[9,1,10,4],[2,6,11,5], [12,8,3,7]\},\{ [1,5,12,6],[3,9,2,10], [8,4,7,11]\}$.

\end{itemize}
\end{proof}

\begin{lemma}
\label{lemmaA2} URD-$(12;P_2, P_3, P_4)\supseteq D(12)$. 
\end{lemma}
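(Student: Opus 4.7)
The plan is to enumerate $D(12)$ explicitly and exhibit a $(P_2,P_3,P_4)$-URD$(12;r,s,t)$ for each of its triples. Unwinding the definition of $D(v)$ for $v=12$ (so $i\in\{0,1,2\}$ and the allowed $y$-ranges come from Table~1) yields the nine triples
$$D(12)=\{(11,0,0),(8,0,2),(5,0,4),(2,0,6),(7,3,0),(4,3,2),(1,3,4),(3,6,0),(0,6,2)\}.$$
The four triples with $s=0$ are precisely those output by Lemma~\ref{lemmaP1} at $v=12$, and the three triples with $t=0$ are precisely those output by Lemma~\ref{lemmaP2} at $v=12$. This leaves only $(4,3,2)$, $(1,3,4)$ and $(0,6,2)$ to construct.

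For each of these three remaining triples, I would use a simple ``group-and-fill'' argument. View $V(K_{12})$ as the point set of a $(P_2,P_3,P_4)$-URGDD of some type $g^u$ (with $gu=12$) furnished by Lemmas~\ref{lemmaA1}--\ref{lemmaA11}, and then fill each of the $u$ groups with a $(K_2,P_3,P_4)$-URD of $K_g$. Since every group carries the same class-type profile, the internal classes can be synchronised type-by-type ($1$-factor with $1$-factor, $P_3$-class with $P_3$-class, $P_4$-class with $P_4$-class), merging into parallel classes of $K_{12}$. The parameters of the resulting URD of $K_{12}$ are then the termwise sum of the URGDD parameters and the URD parameters on a single group.

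Concretely, for $(4,3,2)$ take the URGDD$(3,0,2)$ of type $6^{2}$ from Lemma~\ref{lemmaA1} and fill each of the two groups with a $(K_2,P_3)$-URD$(6;1,3)$, which exists by the $(K_2,P_3)$-URD result recalled in Section~\ref{introduzione}; this contributes an extra $1$-factor and three $P_3$-classes, giving $(3,0,2)+(1,3,0)=(4,3,2)$. For $(1,3,4)$, replace the URGDD by the $(0,0,4)$ of type $6^{2}$ from the same lemma and fill identically, obtaining $(0,0,4)+(1,3,0)=(1,3,4)$. For $(0,6,2)$ I switch to groups of size four: take the URGDD$(0,6,0)$ of type $4^{3}$ from Lemma~\ref{lemmaA11} and fill each of the three groups with a $(K_2,P_4)$-URD$(4;0,2)$ (the decomposition $K_4=P_4\cup P_4$), which contributes two $P_4$-classes, giving $(0,6,0)+(0,0,2)=(0,6,2)$.

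I do not anticipate any real obstacle here: the small URGDDs needed on the groups are already built in Lemmas~\ref{lemmaA1} and~\ref{lemmaA11}, and the relevant URDs of $K_6$ and $K_4$ are classical and already invoked in the introduction. The only step requiring attention is the bookkeeping of which URGDD-plus-fill combination produces each of the three remaining triples, which is the computation carried out in the previous paragraph.
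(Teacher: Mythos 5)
Your proposal is correct, and for seven of the nine triples it is essentially the paper's argument: the paper also obtains $(11,0,0)$, $(8,0,2)$, $(5,0,4)$, $(7,3,0)$, $(4,3,2)$, $(1,3,4)$, $(3,6,0)$ by placing a type-$6^{2}$ URGDD from Lemma~\ref{lemmaA1} and filling both groups with a $(P_2,P_3,P_4)$-URD$(6;r_1,s_1,t_1)$, $(r_1,s_1,t_1)\in\{(5,0,0),(1,3,0)\}$ (you instead pull the pure cases from Lemmas~\ref{lemmaP1} and~\ref{lemmaP2}, but those rest on the same references, so the difference is only bookkeeping). The one genuine divergence is $(0,6,2)$: the paper simply cites the $(P_3,P_4)$-URD$(12;6,2)$ of \cite{GM1}, whereas you build it from the URGDD$(0,6,0)$ of type $4^{3}$ in Lemma~\ref{lemmaA11} and the decomposition of $K_4$ into two Hamiltonian paths (e.g.\ $[1,2,3,4]$ and $[3,1,4,2]$), each of which is a spanning $P_4$-class of $K_4$; synchronising these across the three groups gives the two $P_4$-classes. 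This is a valid and self-contained alternative that removes one external dependency; the paper's citation is shorter but leans on the companion manuscript. Your parameter arithmetic and the class-merging argument (identical fill parameters on every group so that classes of like type union into parallel classes of $K_{12}$) are both sound.
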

\begin{proof}
The case $((2,0,6)$ corresponds to a $(P_2, P_4)$-URD$(12;2,6)$  which is known to exist \cite{M1}.
The case $(0,6,2)$ corresponds to a $(P_3, P_4)$-URD$(12;6,2)$  which is known to exist \cite{GM1}. For all the other cases
take a $(K_{2}, P_3, P_4)$-URGDD$(r,s,t)$ of
type $6^2$ with $(r,s,t)\in \{(6,0,0), (3,0,2), (0,0,4), (2,3,0)\}$, which exists by
Lemma \ref{lemmaA1}. Fill in each  group of size 6 with
a copy of a  $(P_2, P_3, P_{4})$-URD$(6;r_1,s_1,t_1)$ with $(r_1,s_1,t_1)\in \{(5,0,0), (1,3,0)\}$, which exists by
Lemma \ref{lemmaP2}. This gives a $(K_{2},P_3, P_4)$-URD$(12;r,s,t)$ for every
$(r,s,t)\in \{(5,0,0), (1,3,0)\}+ \{(6,0,0), (3,0,2),$ $ (0,0,4), (2,3,0)\}$.
\end{proof}

\begin{lemma}
\label{lemmaD1} There exists a \/ $(P_2, P_3, P_{4})$-URGDD$(r,s,t)$ of type\/ $12^{2}$,
for every  $(r,s,t)\in \bar{D}(12^2)$.
\end{lemma}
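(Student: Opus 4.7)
The target set $\bar{D}(12^2)$ contains exactly $11$ triples, obtained by letting $j=0,1,2,3$ and $y=0,\ldots,\bar{r}(1,j)$ in the definition: the five triples $(12-3y,0,2y)$ with $0\le y\le 4$, the three triples $(8-3y,3,2y)$ with $0\le y\le 2$, the two triples $(4-3y,6,2y)$ with $0\le y\le 1$, and $(0,9,0)$. My plan is to realise each of these as a $(P_2,P_3,P_4)$-URGDD of type $12^2$ by combining a single recursive construction that catches nine of them with two ad hoc constructions for the remaining two.

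The main recursive step is a small instance of the GDD blow-up underlying Theorem \ref{thmR2}, applied at the GDD level (no fill-in). Take a $2$-RGDD of type $2^2$, i.e.\ the $1$-factorization of $K_{2,2}$ into its two perfect matchings; inflate each of the four points by a factor of $6$, and on each of the two blocks of each base parallel class place a $(P_2,P_3,P_4)$-URGDD of type $6^2$ with parameters in $J_1=\{(6,0,0),(3,0,2),(0,0,4),(2,3,0)\}$, which exists by Lemma \ref{lemmaA1}. Choosing the same triple from $J_1$ for both blocks within a given base parallel class lets me pair the inner parallel classes by type to produce genuine parallel classes of $K_{12,12}$. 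Varying the choice independently across the two base parallel classes yields a URGDD of type $12^2$ with parameter in
\[
2\ast J_1=\{(12,0,0),(9,0,2),(6,0,4),(3,0,6),(0,0,8),(8,3,0),(5,3,2),(2,3,4),(4,6,0)\},
\]
which covers $9$ of the $11$ triples.

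The two remaining triples, $(0,9,0)$ and $(1,6,2)$, cannot be reached by this blow-up, because any inner copy would need a parallel class of $P_3$'s on $K_{2,2}$, which does not exist ($4/3$ is not an integer). For these I would supply explicit constructions, in the spirit of the listings in Lemmas \ref{lemmaA1} and \ref{lemmaA11}: take the two groups to be $Z_{12}$ and $Z_{12}'$ and develop a small set of base parallel classes cyclically modulo $12$. The pure $P_3$-RGDD case $(0,9,0)$ is forced to have every vertex as an endpoint in exactly six classes and as a middle vertex in exactly three classes, and can be produced from three developed base $P_3$-classes together with the six edge-matchings that a cyclic $1$-factor-type scheme naturally attaches; the triple $(1,6,2)$ admits an analogous cyclic presentation with one base $1$-factor, two base $P_4$-classes, and two base $P_3$-classes developed mod $12$.

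The main obstacle is precisely these two stragglers: every other triple in $\bar{D}(12^2)$ falls out of $2\ast J_1$, but the two cases with $s\in\{6,9\}$ and few edges to spare force honest combinatorial work at the level of $K_{12,12}$. Once the two cyclic listings are in place, combining them with the nine triples from the blow-up exhausts $\bar{D}(12^2)$ and proves the lemma.
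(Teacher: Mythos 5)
Your main construction coincides with the paper's: both inflate the two $1$-factors of a $2$-RGDD of type $2^2$ by a factor of $6$ and fill each inflated edge with one of the type-$6^2$ ingredients of Lemma \ref{lemmaA1}, and your computation that $2\ast\{(6,0,0),(3,0,2),(0,0,4),(2,3,0)\}$ yields exactly the nine triples of $\bar{D}(12^2)\setminus\{(0,9,0),(1,6,2)\}$ is correct. Your identification of $(0,9,0)$ and $(1,6,2)$ as the triples this blow-up cannot reach is also right (the only type-$6^2$ ingredient with $s_1>0$ is $(2,3,0)$, so the blow-up gives $s\le 6$, and $s=6$ forces $(4,6,0)$), although your stated reason for it is garbled.

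The gap is that the two exceptional designs are never actually produced. For $(0,9,0)$ you assert that a $P_3$-factorization of $K_{12,12}$ ``can be produced'' from three developed base $P_3$-classes together with ``six edge-matchings'', but a design with $r=0$ contains no matching classes at all, and no base blocks are exhibited; the paper settles this case simply by citing Ushio's theorem on $P_3$-factorizations of complete bipartite graphs \cite{U}, which you could also have done. For $(1,6,2)$ you propose ``two base $P_3$-classes developed mod $12$'', which would generate $24$ classes of $P_3$ rather than the required $6$ unless the base classes have large stabilizers --- a point you neither address nor realize in an explicit listing; the paper disposes of this case by writing out its nine classes explicitly. Since these two designs are precisely the content of the lemma that does not follow from the routine blow-up, the argument is incomplete until they are either constructed in full or supported by a citation.
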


\begin{proof}
The case $(0,9,0)$ corresponds to a $(P_3)$-URGDD$(9)$ of type  $12^{2}$ which is known to exist \cite{U}.
To obtain all remaining cases except $(1,6,2)$,
start from a 2-RGDD  of type $2^2$ with the block set partitioned into two 1-factors,  expand each point $6$ times and for each edge $e$ of a 1-factor  place on $e\times \{1,2,3,4,5,6\}$ a copy of a $(P_2, P_3, P_{4})$-URGDD$(r_1,s_1,t_1)$ of type $6^2$ (from
Lemma \ref{lemmaA1}) so to obtain a $(P_2, P_3, P_{4})$-URGDD$(r,s,t)$ of type\/ $12^{2}$, for every $(r,s,t)\in \bar{D}(12^2)\setminus \{(1,6,2),(0,9,0)\}$. For the case $(1,6,2)$ take  the groups $\{0,1,\ldots,11\}$ and $\{0',1',\ldots,11'\}$  and the
classes as listed below:\\
$\{\{0,11'\}, \{1,4'\}, \{2,6'\}, \{3,7'\}, \{4,1'\}, \{5,0'\}, \{6,10'\}, \{7,5'\}, \{8,3'\}, \{9,8'\}, \{10,2'\},\\\{11,9'\}\}$,\\
$\{[0',0,1'],[2',1,3'],[6',6,7'], [8',7,9'], [2,4',3],  [4,5',5], [8,10',9], [10,11',11]\}$,\\
$\{[1',2,5'],[2',5,4'],[7',8,11'], [8',11,10'], [1,0',3],  [7,6',9], [0,3',4], [6,9',10]\}$,\\
$\{[4',4,0'],[5',3,3'],[10',10,6'], [11',9,9'], [1,1',5],  [0,2',2], [7,7',11], [6,8',8]\}$,\\
$\{[0',6,1'],[2',7,3'],[6',0,7'], [8',1,9'], [2,10',3],  [4,11',5], [8,4',9], [10,5',11]\}$,\\
$\{[1',8,5'],[2',11,4'],[7',2,11'], [8',5,10'], [1,6',3],  [7,0',9], [0,9',4], [6,3',10]\}$,\\
$\{[4',10,0'],[5',9,3'],[10',4,6'], [11',3,9'], [1,7',5],  [0,8',2], [7,1',11], [6,2',8]\}$,\\
$\{[4',0,5',1],[6,11',7,10'], [0',2,3',5], [11,6',8,9'],[1',3,2',4], [9,7',10,8']\}$,\\
$\{[7,4',6,5'],[0,10',1,11'], [8,0',11,3'],[2,9',5,6'],[3,8',4,7'], [10,1',9,2']\}$.
\end{proof}

\begin{lemma}
\label{lemmaD2} There exists a \/ $(P_2,P_3, P_{4})$-URGDD$(1,6,10)$ of type\/ $12^{3}$.
\end{lemma}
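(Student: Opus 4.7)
The target $(1,6,10)$ satisfies $6\cdot 1+8\cdot 6+9\cdot 10=144=72(3-1)$, so $(1,6,10)\in\bar{D}(12^3)$ and the necessary condition of Lemma \ref{lemmaP00} (applied with $k=3$) is met. My plan is to build the URGDD directly, as I do not see a way to reach this triple via the recursive constructions of Section 3.

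Indeed, a URGDD-valued version of Theorem \ref{thmR2} (obtained by omitting the step that fills each group with a URD of size $gt$) applied with $(g,u,k,t)=(4,3,2,3)$ forces $J_1=\{(3,0,0)\}$, since the only $(P_2,P_3,P_4)$-URGDD of type $3^2$ has parameters $(3,0,0)$; this produces only $8\ast\{(3,0,0)\}=\{(24,0,0)\}$. With $(g,u,k,t)=(3,3,3,4)$ and the URGDDs of type $4^3$ from Lemma \ref{lemmaA11}, the attainable set $3\ast\{(4,3,0),(1,3,2),(0,6,0)\}$ contains no triple with $t=10$, because any combination $a(4,3,0)+b(1,3,2)+c(0,6,0)$ with $a+b+c=3$ yields $t=2b\le 6$. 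Similarly, a pointwise blow-up of a $(P_2,P_3,P_4)$-URGDD of type $4^3$ by a factor of three (replacing each edge by $K_{3,3}$ and each class by three aligned classes, obtained by taking a $1$-factorization of each $K_{3,3}$ and combining the $i$-th matchings) simply multiplies every parameter by $3$, yielding only $(12,9,0)$, $(3,9,6)$, or $(0,18,0)$. None of these reach the target.

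I would therefore exhibit an explicit construction in the spirit of Lemma \ref{lemmaD1}. Label the vertex set as $\{0,\ldots,11\}\cup\{0',\ldots,11'\}\cup\{0'',\ldots,11''\}$ with the three subsets as groups, and write out seventeen parallel classes of $K_{12,12,12}$: one $1$-factor ($18$ edges), six $P_3$-classes ($24$ edges each), and ten $P_4$-classes ($27$ edges each), totaling $432$ edges. To keep the listing short I would try to develop most classes as orbits of a few base classes under a cyclic action, for instance $Z_{12}$ (or $Z_6$, $Z_4$) acting on each group by a common shift. The main obstacle is the extremality of the target parameters: since $\bar{r}(2,2)=5$, the value $t=10$ is the largest admissible with $s=6$, so the ten $P_4$-classes consume nearly all of the inter-group edges and leave only a tight residual in which the six $P_3$-classes and the single $1$-factor must coexist resolvably. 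Producing and verifying a balanced explicit listing under these constraints is the principal work of the proof.
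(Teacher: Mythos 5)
Your proposal correctly identifies that $(1,6,10)$ lies in $\bar{D}(12^3)$ (indeed in $\bar{D}_2(12^3)$ with $y=5=\overline{r}(2,2)$, so it is extremal), and your analysis showing that the recursive constructions from type $6^2$, $3^2$, or $4^3$ ingredients cannot reach $t=10$ is sound and matches why the paper isolates this triple for special treatment. However, there is a genuine gap: you never actually produce the decomposition. The entire content of the lemma is the existence of an explicit $(P_2,P_3,P_4)$-URGDD$(1,6,10)$ of type $12^3$, and your proof stops at ``I would exhibit an explicit construction \ldots Producing and verifying a balanced explicit listing \ldots is the principal work of the proof.'' Announcing that the work remains to be done is not a proof of existence; as written, the lemma is unproven.

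For comparison, the paper does exactly what you propose in outline but carries it through: it takes the vertex set $Z_{36}$ with groups $G_i=3Z_{36}+i$, $i=0,1,2$, and lists seventeen base classes developed cyclically --- one $1$-factor class of $18$ edges (generated by the pairs $\{1+4i,2+4i\}$, $\{3+4i,8+4i\}$ for $i=0,\dots,8$), six $P_3$-classes of $12$ paths each, and ten $P_4$-classes of $9$ paths each, for a total of $18+6\cdot 24+10\cdot 27=432$ edges of $K_{12,12,12}$. To complete your proof you would need to supply a comparable explicit listing and verify (i) that every listed edge joins vertices in distinct groups, (ii) that each class is a parallel class on all $36$ points, and (iii) that the $432$ edges are pairwise distinct. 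Your observation about the tightness of the parameters is accurate but only underscores that the explicit verification cannot be waved away.
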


\begin{proof}
Let $G_i=3Z_{36}+i$, $ i=0,1,2$. We
construct a $(P_2,P_3, P_{4})$-URGDD$(1,6,10)$ of type\/ $12^{3}$ by listing its  classes as follows:\\
$\{\{1+4i,2+4i\}, \{3+4i,8+4i\}: i=0,1\ldots,8, i\in Z_{36}\}$\\
$\{[3i,2+3i,4+3i]: i=0,1\ldots,11, i\in Z_{36}\}$,\\ $\{[1+3i,3+3i,11+3i]: i=0,1\ldots,11, i\in Z_{36}\}$,\\ $\{[3i,4+3i,8+3i]: i=0,1\ldots,11, i\in Z_{36}\}$,\\ $\{[6+3i,2+3i,10+3i]: i=0,1\ldots,11, i\in Z_{36}\}$,\\$\{[9+3i,1+3i,17+3i]: i=0,1\ldots,11, i\in Z_{36}\}$,\\ $\{[34+3i,18+3i,2+3i]: i=0,1\ldots,11, i\in Z_{36}\}$,\\
$\{[2+4i,3+4i,4+4i,5+4i]: i=0,1\ldots,8, i\in Z_{36}\}$,\\
$\{[4i,5+4i,10+4i,15+4i]: i=0,1\ldots,8, i\in Z_{36}\}$,\\
$\{[4i,7+4i,14+4i,1+4i]: i=0,1\ldots,8, i\in Z_{36}\}$,\\
$\{[11+4i,1+4i,8+4i,18+4i]: i=0,1\ldots,8, i\in Z_{36}\}$,\\
$\{[12+4i,2+4i,9+4i,23+4i]: i=0,1\ldots,8, i\in Z_{36}\}$,\\
$\{[14+4i,3+4i,13+4i,4i]: i=0,1\ldots,8, i\in Z_{36}\}$,\\
$\{[11+4i,4i,14+4i,25+4i]: i=0,1\ldots,8, i\in Z_{36}\}$,\\
$\{[12+4i,1+4i,18+4i,31+4i]: i=0,1\ldots,8, i\in Z_{36}\}$,\\
$\{[2+4i,19+4i,32+4i,13+4i]: i=0,1\ldots,8, i\in Z_{36}\}$,\\
$\{[17+4i,3+4i,20+4i,6+4i]: i=0,1\ldots,8, i\in Z_{36}\}$.

\end{proof}

\begin{lemma}
\label{lemmaD3} There exists a \/ $(P_2, P_3, P_{4})$-URGDD$(r,s,t)$ of type\/ $12^{3}$, for every 
$(r,s,t)\in \bar{D}(12^{3})$.
\end{lemma}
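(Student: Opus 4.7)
The plan is to mirror the structure of the proof of Lemma \ref{lemmaD1}: handle most triples with two RGDD blow-up constructions, and pick up the exceptions by a direct cyclic construction. First I would use the $2$-RGDD of type $2^3$ (a $1$-factorization of $K_{2,2,2}$, with $4$ parallel classes of $3$ disjoint edges). Expanding each point by $6$ and placing on every $K_{6,6}$ an identical URGDD of type $6^2$ from Lemma \ref{lemmaA1}, the small parallel classes concatenate; letting the chosen element of $\bar{D}(6^2)$ vary across the $4$ parallel classes of the outer $2$-RGDD yields a URGDD of type $12^3$ for every triple in $4*\bar{D}(6^2)$.

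Next I would use the $3$-RGDD of type $3^3$ (a Latin square of order $3$, with $3$ parallel classes of $3$ triangles). Expanding each point by $4$ and placing on every expanded $K_{4,4,4}$ a URGDD of type $4^3$ from Lemma \ref{lemmaA11} produces URGDDs of type $12^3$ for every triple in $3*\{(0,6,0),(4,3,0),(1,3,2)\}$. In both constructions, insisting that the URGDDs placed inside one parallel class of the outer RGDD share the same parameter triple is essential, so that their small parallel classes line up type-by-type.

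A direct enumeration of $\bar{D}(12^3)$ --- the $33$ triples $(24-4j-3y,3j,2y)$ with $0\le j\le 6$ and $0\le y\le \bar{r}(2,j)$ --- shows that the two blow-up constructions together cover every triple except $(1,6,10)$, which Lemma \ref{lemmaD2} already supplies, and $(0,9,8)$. This last triple lies outside $4*\bar{D}(6^2)$, because $s=9$ forces three summands equal to $(2,3,0)$ and hence $r\ge 6$, and it lies outside $3*\{(0,6,0),(4,3,0),(1,3,2)\}$, because $r=0$ forces all three summands to equal $(0,6,0)$ and hence $s=18$.

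The hard part will therefore be constructing a $(P_3,P_4)$-URGDD of type $12^3$ with parameters $(0,9,8)$ directly. I would build it cyclically on $\mathbb{Z}_{36}$ with groups $G_i=3\mathbb{Z}_{36}+i$, in the spirit of Lemma \ref{lemmaD2}: nine $P_3$-base blocks, each developed by the twelve shifts $x\mapsto x+3$ into a full $P_3$-parallel class, together with eight $P_4$-base blocks, each developed by the nine shifts $x\mapsto x+4$ into a full $P_4$-parallel class, chosen so that the pure differences they produce exactly partition the non-group edges of $K_{12,12,12}$. Exhibiting a valid set of base blocks is a finite combinatorial check, and is the only substantive step beyond the two blow-up constructions.
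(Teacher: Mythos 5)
Your two blow-up constructions (the $2$-RGDD of type $2^3$ expanded by $6$ with the type-$6^2$ ingredients of Lemma \ref{lemmaA1}, and the $3$-RGDD of type $3^3$ expanded by $4$ with the type-$4^3$ ingredients of Lemma \ref{lemmaA11}) are exactly the two constructions the paper uses, and your identification of the uncovered triples is correct: the first construction misses precisely $\{(1,6,10),(3,9,6),(0,9,8),(5,12,2),(2,12,4),(4,15,0),(1,15,2),(0,18,0)\}$, the second picks up all of these except $(1,6,10)$ and $(0,9,8)$, and $(1,6,10)$ is supplied by Lemma \ref{lemmaD2}. Your exclusion arguments for $(0,9,8)$ from both sumsets are also correct. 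Up to this point you have reproduced the paper's proof.

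The genuine gap is the triple $(0,9,8)$. You reduce the whole lemma to ``exhibiting a valid set of base blocks'' for a $(P_3,P_4)$-URGDD$(0,9,8)$ of type $12^3$, but you never exhibit them, so the one case that cannot be obtained by composition is left unproved. Your feasibility count is fine ($9\cdot 12\cdot 2+8\cdot 9\cdot 3=432=\binom{36}{2}-3\binom{12}{2}$, and the proposed orbit lengths $12$ and $9$ do give parallel classes on $36$ points), and the template of Lemma \ref{lemmaD2} makes it plausible that such base blocks exist, but plausibility is not a proof: a cyclic construction with $P_3$ classes developed by $+3$ and $P_4$ classes developed by $+4$ imposes nontrivial difference conditions that must actually be verified. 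The paper does not construct this design at all; it closes the case by citing Gionfriddo and Milici \cite{GM1}, whose $(P_3,P_4)$-URD results furnish a $(P_3,P_4)$-URGDD$(0,9,8)$ of type $12^3$ directly. So either supply the explicit base blocks (and check the differences) or invoke \cite{GM1} as the paper does; as written, your argument for $(0,9,8)$ is a statement of intent rather than a proof.
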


\begin{proof} 
The case $(0,9,8)$ corresponds to a $(P_2, P_3,P_4)$-URGDD$(0,9,8)$ of type  $12^{3}$ which is known to exist \cite{GM1}, while the case
$(1,6,10)$  is given by Lemma \ref{lemmaD2}.\\
For the cases $(4,15,0), (0,18,0), (5,12,2),(1,15,2), (2,12,4), (3,9,6)$,
start from a 3-RGDD $\cD$ of type $3^3$ with three parallel classes, expand each point $4$ times and for each block $b$ of a given parallel class of $\cD$ place on $b\times \{1,2,3,4\}$ a copy of a  $(P_2, P_3, P_{4})$-URGDD$(r_1,s_1,t_1)$ of type\/ $4^{3}$, with $(r_1,s_1,t_1)\in \{(0,6,0), (4,3,0),(1,3,$ $2)\}$, which exists by Lemma \ref{lemmaA11}. Since $\cD$ contains three parallel classes the result is a $(P_2, P_3,$ $ P_{4})$-URGDD$(r,s,t)$ of type\/ $12^{3}$, for every $(r,s,t)\in 3*\{(0,6,0), (4,3,0),(1,3,$ $2)\}\supseteq\{(4,15,0), (0,18,0), (5,12,2),(1,15,2), (2,12,4), (3,9,6)\}$.\\
To settle the remaining cases, start from a 2-RGDD of type $2^3$ with the block set partitioned into four 1-factors, expand each point $6$ times and for each edge $e$ of a 1-factor  place on $e\times \{1,2,3,4,5,6\}$ a copy of a $(P_2, P_3, P_{4})$-URGDD$(r_1,s_1,t_1)$ of type\/ $6^{2}$, with $(r_1,s_1,t_1)\in \{(6,0,0), (3,0,$ $2), (0,0,4), (2,3,0)\}$, which exists by Lemma \ref{lemmaA1}. The result is a $(P_2, P_3, P_{4})$-URGDD$(r,s,t)$ of type\/ $12^{3}$,
for every $(r,s,t)\in 4* \{(6,0,0), (3,0,2), (0,0,4), (2,3,0)\} \,\supseteq\, \{\bar{D}(12^{3})\setminus\{(4,15,0), (0,18,0), (5,12,2),(1,15,$ $2), (2,12,4), (3,9,6),(0,9,8),(1,6,10)\}$.
\end{proof}

\begin{lemma}
\label{lemmaD36}
URD-$(36;P_2, P_3, P_4)\supseteq D(36)$.
\end{lemma}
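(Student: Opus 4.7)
The plan is to split $D(36)$ into three regions according to the shape of the triple $(r,s,t)$ and to handle each region separately, reducing everything to results already established.

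\textbf{Boundary cases.} First I would dispose of every triple $(r,s,t)\in D(36)$ with $s=0$ or $t=0$. The triples with $s=0$ are precisely the ones of the form $(35-3y,0,2y)$, $0\le y\le 11$, and these are realized by Lemma \ref{lemmaP1} (since $36\equiv 0\pmod{12}$, the relevant range is $0\le y\le \frac{v-3}{3}=11$). Symmetrically, the triples with $t=0$ have the form $(35-4x,3x,0)$, $0\le x\le 8$, and are realized by Lemma \ref{lemmaP2}. This clears two complete ``axes'' of $D(36)$.

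\textbf{The main construction.} For the remaining triples, with $s>0$ and $t>0$, the idea is to emulate Lemma \ref{lemmaA2}: take a $(P_2,P_3,P_4)$-URGDD$(r_1,s_1,t_1)$ of type $12^{3}$ from Lemma \ref{lemmaD3} and fill each of the three groups with a $(P_2,P_3,P_4)$-URD$(12;r_2,s_2,t_2)$ from Lemma \ref{lemmaA2}, using the \emph{same} triple $(r_2,s_2,t_2)\in D(12)$ in all three groups so that aligned classes can be glued into full parallel classes on the $36$ vertices. The resulting decomposition realizes the triple $(r_1+r_2,\,s_1+s_2,\,t_1+t_2)$. Consequently the construction produces every triple in $\bar{D}(12^{3})+D(12)$, and it suffices to verify the inclusion
\[
\{(r,s,t)\in D(36):s>0,\ t>0\}\;\subseteq\;\bar{D}(12^{3})+D(12).
\]

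\textbf{The arithmetic check.} Writing a target $(r,s,t)=(35-4x-3y,3x,2y)$ with $1\le x\le 8$ and $1\le y\le r(36,x)$, I would split $x=i+j$ with $i\in\{0,1,2\}$ and $j\in\{0,1,\dots,6\}$, and split $y=y_1+y_0$ with $0\le y_1\le r(12,i)$ and $0\le y_0\le \bar r(2,j)$. The decomposition then corresponds to $(r_1,s_1,t_1)=(24-4j-3y_0,3j,2y_0)\in\bar{D}(12^{3})$ plus $(r_2,s_2,t_2)=(11-4i-3y_1,3i,2y_1)\in D(12)$. A case analysis on $x\in\{1,\dots,8\}$ shows that at least one pair $(i,j)$ with $i+j=x$ satisfies $r(12,i)+\bar r(2,j)\ge r(36,x)$, so every admissible $y$ can be written as $y_1+y_0$ inside the allowed ranges.

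\textbf{Expected difficulty.} The structural step (URGDD filled with congruent URDs) is essentially the same idea used for $v=12$, so the only nontrivial piece is the bookkeeping for the sumset $\bar{D}(12^{3})+D(12)$; the mild subtlety is at the ``corners'' of $D(36)$ (the triples with $x\in\{7,8\}$ and $y$ near $r(36,x)$), where only one or two splittings $x=i+j$ remain feasible. Once those extreme cases are verified, the rest of the check is automatic and the lemma follows.
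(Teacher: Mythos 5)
Your proposal is correct and follows essentially the same route as the paper: the paper also builds the $36$-point designs by placing a $(P_2,P_3,P_4)$-URGDD of type $12^3$ from Lemma \ref{lemmaD3} and filling the three groups with copies of a $(P_2,P_3,P_4)$-URD$(12)$ from Lemma \ref{lemmaA2} (phrased via Theorem \ref{thmR2} with a $3$-RGDD of type $1^3$), and then verifies $D(12)+\bar{D}(12^{3})=D(36)$ by the same kind of index bookkeeping you sketch. Your separate treatment of the $s=0$ and $t=0$ boundary is harmless but unnecessary, since the sumset identity already covers those triples.
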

\begin{proof}
Start from a $3$-RGDD of type $1^{3}$  and apply Theorem \ref{thmR2} with $t=12, g=1, u=3$
(the input designs are a $(P_2, P_3,P_4)$-URGDD$(r_1, s_1,t_1)$ of type $12^{3}$  with $(r_1,s_1,t_1)\in \bar{D}(12^{3})$, which exists by  Lemma \ref{lemmaD3}, and a $(P_2, P_3,P_4)$-URD$(12;r_2,s_2,t_2)$ with $(r_2,s_2,t_2)\in D(12)$, which exists by Lemma \ref{lemmaA2}). This implies
$$ URD(v; P_2, P_3, P_4)\supseteq D(12)+\bar{D}(12^{3}).$$
Taking into account that
$$D(12)= D_0(12)\cup D_1(12)\cup D_2(12)$$
and
$$\bar{D}(12^{3})=\bar{D}_0(12^{3})\cup \bar{D}_1(12^{3})\cup \ldots \cup \bar{D}_{6}(12^{3}).$$
Since it is easy to see that, for each $j=0,1,\ldots,6$
$$ \bar{D}_{j}(12^{3})+D_2(12)=D _{j+2}(36) $$
and, for each $i=0,1$,
$$ \bar{D}_{0}(12^{3})+D_{i}(12)=D _{i}(36), $$
we obtain
$D(12)+\bar{D}(12^{3})=D(36)$ and
this complete the proof.
\end{proof}

\begin{lemma}
\label{lemmaD5} There exists a \/ $(P_2, P_3,P_4)$-IURD$(36,12;[r_1,s_1,t_1],[r_2,s_2,t_2])$ for e\-ve\-ry
 $(r_1,s_1,t_1)\in D(12)$ and for every $(r_2,s_2,t_2) \in 2 \ast \bar{D}(12^{2})$.
\end{lemma}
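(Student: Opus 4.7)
The plan is to construct the IURD directly by viewing $E(K_{36})\setminus E(K_{12})$ as the edge-disjoint union of two copies of $K_{12}$ on the halves of the non-hole vertex set, together with a complete tripartite graph $K_{12,12,12}$; the partial classes will come from two URDs of order $12$ and the full classes from a URGDD of type $12^{3}$.

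Concretely, I would partition the $36$ vertices into three sets $A,B,C$ of size $12$ with $A$ the hole, so that the edges of $K_{36}$ outside $K_{A}$ decompose as the disjoint union of the edge sets of $K_{B}$, $K_{C}$ and the complete tripartite graph with parts $A,B,C$. For the full classes I would take, on the partition $\{A,B,C\}$, a $(P_2,P_3,P_4)$-URGDD$(r_2,s_2,t_2)$ of type $12^{3}$, which exists for every $(r_2,s_2,t_2)\in \bar{D}(12^{3})$ by Lemma \ref{lemmaD3}; each of its resolution classes covers all $36$ vertices and therefore serves as a full class. For the partial classes I would place a copy of a $(P_2,P_3,P_4)$-URD$(12;r_1,s_1,t_1)$ on each of $B$ and $C$ (existing for every $(r_1,s_1,t_1)\in D(12)$ by Lemma \ref{lemmaA2}), then pair up the classes of matching type: a $P_k$-class on $B$ with a $P_k$-class on $C$. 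Each such pair is a vertex-disjoint union of two parallel classes of the same path type, hence a uniform parallel class on the $24$-point set $B\cup C$, and together they produce exactly $r_1$ $1$-factors, $s_1$ $P_3$-classes and $t_1$ $P_4$-classes, as required for the partial part.

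Finally, to recover the exact parameter set $2\ast\bar{D}(12^{2})$ for the full classes, I would invoke Lemma \ref{lemmaF1} with $h=2$, which gives $2\ast\bar{D}(12^{2})=\bar{D}(12^{3})$; thus letting $(r_2,s_2,t_2)$ range over $\bar{D}(12^{3})$ is equivalent to letting it range over $2\ast\bar{D}(12^{2})$, as stated. No serious obstacle is expected here: the edge partition of $K_{36}-K_{12}$ is immediate, both ingredient designs are already available, and the pairing of classes is legitimate because the two URDs of order $12$ have identical class-type profiles; the only bookkeeping is the parameter-set identification supplied by Lemma \ref{lemmaF1}.
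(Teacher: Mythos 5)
Your construction is exactly the one in the paper: take a $(P_2,P_3,P_4)$-URGDD$(r_2,s_2,t_2)$ of type $12^{3}$ from Lemma \ref{lemmaD3} for the full classes, fill two of the three groups with copies of a $(P_2,P_3,P_4)$-URD$(12;r_1,s_1,t_1)$ from Lemma \ref{lemmaA2} to form the partial classes, and use Lemma \ref{lemmaF1} to identify $2\ast\bar{D}(12^{2})$ with $\bar{D}(12^{3})$. The proposal is correct; it merely spells out the edge-partition and the pairing of same-type classes that the paper leaves implicit.
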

\begin{proof}
Lemma \ref{lemmaF1} gives $2*\bar{D}(12^{2})=\bar{D}(12^{3})$. Start from a $(P_2, P_3, P_{4})$-URGDD$(r_2,$ $s_2,t_2)$ of type\/ $12^{3}$  with
$(r_2,s_2,t_2)\in \bar{D}(12^{3})$, which exists by Lemma \ref{lemmaD3}, and fill in two groups of size 12 with a copy of a
$(P_2, P_3,P_4)$-URD$(12;r_1,s_1,t_1)$ with $(r_1,s_1,t_1)\in D(12)$, which exists by Lemma \ref{lemmaA2}.
\end{proof}

In order to obtain our main result, we need to handle a further case, $v=60$, which will be discussed in a separate section.

\subsection{The case $v=60$}

\begin{lemma}
\label{lemmaA4}{\rm \cite{M1}} URD-$(20;P_2, P_3, P_4)=D(20)=\{(19-3x,0,2x),x=0,1,\ldots,6\}$.
\end{lemma}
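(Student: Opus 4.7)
The plan is to deduce Lemma \ref{lemmaA4} from Lemma \ref{lemmaP1} after observing one simple divisibility restriction. Specifically, any parallel class of $P_3$'s on $v$ vertices partitions the vertex set into blocks of three vertices each, so $3 \mid v$ is necessary for such a class to exist. Since $v = 20$ is not divisible by $3$, no $(P_2, P_3, P_4)$-URD$(20; r, s, t)$ can contain a $P_3$-class, and hence $s = 0$ is forced. Note that the other residue conditions cause no trouble: $2 \mid 20$ permits $1$-factors and $4 \mid 20$ permits $P_4$-classes, so only the $P_3$ contribution is ruled out.

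With $s = 0$ forced, the problem reduces to determining which pairs $(r, t)$ are realised by a $(P_2, P_4)$-decomposition of $K_{20}$ into $r$ $1$-factors and $t$ parallel classes of $P_4$'s. This is precisely the content of Lemma \ref{lemmaP1} applied to $v = 20 \equiv 8 \pmod{12}$, which yields
\[
(r, 0, t) \in \left\{ \left(19 - 3x,\, 0,\, 2x\right) : x = 0, 1, \ldots, 6 \right\},
\]
matching the set $D(20)$ as displayed in the statement. The existence of each such triple is already established in \cite{M1, Y}, so no new construction is required here.

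The main (and essentially the only) conceptual step is the divisibility observation that eliminates $s > 0$; everything else is a direct appeal to the previously stated result. There is no real obstacle, and Lemma \ref{lemmaA4} functions as a bookkeeping statement identifying $URD(20; P_2, P_3, P_4)$ with $D(20)$ in the notation of this paper, thereby allowing the case $v = 20$ to be used as an ingredient in the constructions that follow.
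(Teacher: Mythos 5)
Your proposal is correct and matches the paper's treatment: the paper simply cites \cite{M1} for this lemma, and the content of that citation is exactly your reduction — $3\nmid 20$ forces $s=0$, after which Lemma~\ref{lemmaP1} with $v=20\equiv 8\pmod{12}$ gives precisely the set $\{(19-3x,0,2x),\ x=0,1,\ldots,6\}$. No new construction is needed, as you observe.
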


In what follows, we will denote by $C_{m(n)}$ the graph $G_{(n)}$ where $G$ is a $m$-cycle (a $m$-cycle $C_m$ with vertex set $\{a_1,a_2,\ldots, a_m\}$ and edge set 
$\{\{a_1,a_2\}, \{a_2,a_3\}, \ldots, \{a_{m-1},$ $a_m\}, \{a_{m},a_1\}\}$
 will be denoted by 
$(a_1,a_2,\ldots, a_m)$).

\begin{lemma}
\label{lemmaD601} There exists a resolvable $P_3$-decomposition of $C_{5(6)}$.

\end{lemma}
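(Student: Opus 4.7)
The graph $C_{5(6)}$ has $30$ vertices, is $12$-regular and has $180$ edges, so a resolvable $P_3$-decomposition must consist of exactly $9$ parallel classes, each a $P_3$-factor containing $10$ paths and $20$ edges. Identify $V(C_{5(6)})=\mathbb{Z}_5\times\mathbb{Z}_6$ with cells $V_i=\{i\}\times\mathbb{Z}_6$, so that $\sigma:(i,j)\mapsto(i,j+1)$ is an order-$6$ automorphism of $C_{5(6)}$. Within the bipartite slab $K_{6,6}$ between consecutive cells $V_i,V_{i+1}$, the $36$ edges split into $6$ orbits of length $6$ under $\sigma$, indexed by the difference $\delta=j'-j\in\mathbb{Z}_6$; there are $30$ such edge-orbits in total.

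My plan is to realise the $9$ classes as $6+3$, using a starter $P_3$-factor $\mathcal{P}$ with trivial $\sigma$-stabiliser (developed to $6$ classes) and a starter $P_3$-factor $\mathcal{Q}$ fixed by $\sigma^3$ (developed to $3$ classes), so that $\{\sigma^k\mathcal{P}:0\le k\le 5\}\cup\{\sigma^k\mathcal{Q}:0\le k\le 2\}$ is the desired set of $9$ parallel classes. Edge-disjointness forces $\mathcal{P}$'s $20$ edges to lie in $20$ distinct $\sigma$-orbits, and $\mathcal{Q}$'s $20$ edges to lie in the remaining $10$ $\sigma$-orbits, each used twice as the two members of a $\sigma^3$-pair. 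A natural balanced target is that $\mathcal{P}$ cover $4$ of the $6$ differences in each bipartite slab and $\mathcal{Q}$ cover the complementary $2$.

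I would build $\mathcal{P}$ from a suitable mixture of ``straight'' paths $[(i,a),(i+1,b),(i+2,c)]$ through three consecutive cells and ``bounce'' paths $[(i,a),(i+1,b),(i,c)]$ between two cells, tuning the $\mathbb{Z}_6$-indices so that the $30$ vertices are partitioned and the $20$ edge-differences $(i,\delta)$ are pairwise distinct. I would build $\mathcal{Q}$ from $5$ base paths whose $\sigma^3$-shifts supply the other $5$ paths; since $\sigma^3$ acts freely on $\mathbb{Z}_6$, no individual $P_3$ can be $\sigma^3$-fixed, so this pairing is forced. Once both starters are written down, all $9$ classes are obtained by $\sigma$-development.

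The main obstacle is the joint choice of the two starters. Producing a $P_3$-factor $\mathcal{P}$ with $20$ distinct edge-differences is a moderate combinatorial exercise, but one must additionally arrange that the $10$ differences it leaves uncovered are themselves realisable by a $\sigma^3$-symmetric $P_3$-factor $\mathcal{Q}$ partitioning all $30$ vertices; this compatibility between $\mathcal{P}$ and $\mathcal{Q}$ is the only step that is not purely routine. After a compatible pair has been exhibited, verifying resolvability reduces to checking the vertex partition within each starter and the disjointness of the edge supports across the $9$ developed classes, both of which follow directly from the $\sigma$- and $\sigma^3$-invariance properties used in the construction.
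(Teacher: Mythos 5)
Your framework is internally consistent: the counts (30 vertices, 180 edges, hence 9 parallel classes of 10 paths each) are right, the $6+3$ orbit scheme under the order-$6$ automorphism $\sigma$ is arithmetically coherent, and the edge-disjointness criteria you state (the 20 edges of $\mathcal{P}$ in 20 distinct $\sigma$-orbits; the 20 edges of $\mathcal{Q}$ filling 10 $\sigma$-orbits in full $\sigma^3$-pairs) are the correct ones. But the proposal stops exactly where the proof has to begin. For an existence lemma of this kind the entire content is the exhibition of the starter blocks, and you never write down $\mathcal{P}$ or $\mathcal{Q}$. You yourself single out the joint choice of the two starters --- a $P_3$-factor realising 4 of the 6 differences in each bipartite slab whose complementary 2 differences per slab are simultaneously realisable by a $\sigma^3$-invariant $P_3$-factor on all 30 vertices --- as the one step that is ``not purely routine,'' and then leave it undone. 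Until a compatible pair is actually exhibited (or its existence otherwise argued), there is no proof; it is not even clear a priori that your particular $6{+}3$ split of the nine classes is achievable.

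For comparison, the paper's proof is a different and, in this respect, easier instantiation of the same development idea: it lists three explicit sets $\cB_0^{(1)},\cB_0^{(2)},\cB_0^{(3)}$ of five base paths each (so each starter is a \emph{half}-class covering 15 of the 30 vertices, exactly one vertex from each $\sigma^3$-pair), develops each in $\Z_6$, and assembles the nine parallel classes as $\cB_i^{(j)}\cup\cB_{i+3}^{(j)}$ for $j=1,2,3$ and $i=0,1,2$. Each base set contributes just two edge-differences in each of the five slabs, and the only global constraint is that the three base sets use pairwise disjoint difference pairs per slab --- a much looser condition than forcing a single starter to be a full parallel class with 20 distinct differences. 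If you wish to complete your argument you must produce your two starters explicitly and verify the difference conditions; otherwise the half-class scheme is the easier one to instantiate.
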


\begin{proof}
On $Z_6\times Z_5$ consider the set $\cB$ of copies of $P_3$ obtained by developing in $Z_6$ the following base blocks (partitioned into three sets, for convenience):

\bigskip

$\cB_0^{(1)}$: \ $[0_1,0_0,1_1], [2_2,2_1,3_2], [1_3,1_2,2_3], [2_4,0_3,3_4], [1_0,1_4,2_0]$;

\bigskip

$\cB_0^{(2)}$: \ $[2_1,0_0,3_1], [3_2,1_1,4_2], [4_3,2_2,5_3], [4_4,0_3,5_4], [4_0,0_4,5_0]$;

\bigskip

$\cB_0^{(3)}$: \ $[4_1,0_0,5_1], [4_2,0_1,5_2], [4_3,0_2,5_3], [0_4,0_3,1_4], [4_0,2_4,5_0]$.

\bigskip

\noindent For $j=1,2,3$ and $i\in Z_6$, let $\cB_i^{(j)}=\{b+i: b\in \cB_0^{(j)}\}$. The blocks of $\cB$ can be partitioned into the nine parallel classes $\cB_i^{(j)}\cup \cB_{3i}^{(j)}$, for $j=1,2,3$ and $i=0,1,2$.
\end{proof}

\begin{lemma}
\label{lemmaE2} There exists a \/ $(P_2, C_{5})$-URGDD$(s,t)$ of type\/ $2^{5}$, for every 
$(s,t)\in \{(8,0),(6,1),(4,2)\}$.
\end{lemma}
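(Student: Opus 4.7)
The plan is to show each case by an explicit construction with the groups $G_i=\{i,i+5\}$, $i\in\{0,1,2,3,4\}$, of the complete multipartite graph $K_{2,2,2,2,2}$ on $Z_{10}$. First I would record the counting check: the number of edges of $K_{2,2,2,2,2}$ is $\binom{5}{2}\cdot 4=40$, each $1$-factor uses $5$ edges, and each $C_5$-parallel class uses $10$ edges (two disjoint $5$-cycles covering all ten vertices), so any $(P_2,C_5)$-URGDD$(s,t)$ of type $2^5$ must satisfy $5s+10t=40$, i.e.\ $s+2t=8$. All three target pairs $(8,0)$, $(6,1)$, $(4,2)$ pass this test, so the necessary condition is met.

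For the case $(8,0)$, I would appeal to (or quickly display) a $1$-factorization of $K_{2,2,2,2,2}$: this graph is $8$-regular, and a standard cyclic construction — take the $1$-factor $F_0=\{\{0,1\},\{2,4\},\{3,7\},\{5,8\},\{6,9\}\}$ of the 1-factorization of $K_{10}$ that fixes the $K_2$-parts $\{i,i+5\}$, and develop mod $10$ — produces eight parallel classes of edges between distinct groups. The existence of such a $1$-factorization of $K_{2n}-nK_2$ is in any event a classical fact, so this case can be disposed of in one line by citation.

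For the cases with $C_5$-classes, my main task is to produce the $C_5$-parallel classes explicitly. A single $C_5$-parallel class is easy: the two cycles
\[
\bigl(0,1,2,3,4\bigr)\ \text{ and }\ \bigl(5,6,7,8,9\bigr)
\]
use only inter-group edges and cover all vertices. For $(6,1)$, I would present this class and then verify that the remainder is a $6$-regular multipartite graph which I $1$-factorize by listing six perfect matchings on the remaining $30$ edges. For $(4,2)$, I would add a second $C_5$-parallel class whose edges are disjoint from the first, for instance $(0,2,4,1,3)$ together with $(5,7,9,6,8)$, and then exhibit four $1$-factors on the remaining $20$ edges.

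The main obstacle is purely bookkeeping: choosing the second $C_5$-parallel class so its edge set is disjoint from the first \emph{and} from enough candidate $1$-factors to let the residual $4$-regular multipartite graph be $1$-factorized explicitly. Since all the residuals are regular bipartite-like graphs on $10$ vertices with few edges, a direct check (or a simple greedy/edge-colouring argument) will produce the needed $1$-factors. Thus the lemma follows by inspecting the three listed triples and exhibiting the corresponding parallel classes.
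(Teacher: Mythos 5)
Your approach is essentially the paper's: the published proof is also a direct construction on $Z_{10}$ with groups $G_i=\{i,i+5\}$, dismissing $(8,0)$ as trivial and listing explicit $1$-factors together with the $C_5$-classes $(i,2+i,4+i,6+i,8+i)$ and $(i,4+i,8+i,2+i,6+i)$, $i=0,1$ (your choice $(0,1,2,3,4)$, $(5,6,7,8,9)$ and $(0,2,4,1,3)$, $(5,7,9,6,8)$ is an equally valid variant). Your counting check and the $(4,2)$ case are fine: after deleting both $C_5$-classes the residual is exactly $K_{5,5}$ minus the perfect matching $\{\{i,i+5\}\}$, which is $4$-regular bipartite and hence $1$-factorizable by K\"onig.

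The one point you cannot wave off as bookkeeping is the $(6,1)$ case: the residual there is $6$-regular but \emph{not} bipartite (it is $K_{5,5}$ minus a perfect matching together with one $5$-cycle inside each half), and regular graphs of even order need not be $1$-factorizable (the Petersen graph is the standard warning). Moreover, no perfect matching of this residual can live entirely inside the two halves, so you cannot simply split off the intra-half edges as separate factors; the five intra-half edges on each side must be distributed among the six matchings with multiplicities $2,2,1,0,0,0$. This does work out, e.g.\ take
$\{0,2\},\{1,4\},\{7,9\},\{6,8\},\{3,5\}$; \ $\{2,4\},\{1,3\},\{6,9\},\{5,8\},\{0,7\}$; \ $\{0,3\},\{5,7\},\{1,8\},\{2,9\},\{4,6\}$;
whose cross edges together form the matching $\{i,(i+2 \bmod 5)+5\}$, leaving the three bipartite matchings $\{i,(i+d \bmod 5)+5\}$, $d=1,3,4$, as the remaining $1$-factors. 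With such a list (or the paper's own) supplied, your argument is complete; without it, the $(6,1)$ case is asserted rather than proved.
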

\begin{proof} The case $(8,0)$ is trivial. \\
Take $Z_{10}$ as vertex set and $G_i=5Z_{10}+i$, $ i=0,1,2,3,4$ as groups;  the  classes listed below:
\begin{itemize}
\item
$(6,1)$: \\
$\{\{2i,1+2i\}: i=1,2,4, i\in Z_{10}\}\cup \{\{0,6\},\{1,7\}\}$,\\
$\{\{1+2i,2+2i\}: i=0,1,2,3,4, i\in Z_{10}\}$,\\
$\{\{2i,3+2i\}: i=0,1,2,3,4, i\in Z_{10}\}$,\\
$\{\{1+2i,4+2i\}: i=0,1,2,3,4, i\in Z_{10}\}$,\\
$\{\{0,4\},\{8,2\},\{1,5\},\{9,3\},\{6,7\}\}$,\\
$\{\{4,8\},\{2,6\},\{5,9\},\{3,7\},\{0,1\}\}$,\\
$\{(i,2+i,4+i,6+1,8+i): i=0,1, i\in Z_{10}\}$.
\item
$(4,2)$:\\
$\{\{2i,1+2i\}: i=0,1,2,3,4, i\in Z_{10}\}$,\\
$\{\{1+2i,2+2i\}: i=0,1,2,3,4, i\in Z_{10}\}$,\\
$\{\{2i,3+2i\}: i=0,1,2,3,4, i\in Z_{10}\}$,\\
$\{\{1+2i,4+2i\}: i=0,1,2,3,4, i\in Z_{10}\}$,\\
$\{(i,2+i,4+i,6+i,8+i): i=0,1, i\in Z_{10}\}$\\
$\{(i,4+i,8+i,2+i,6+i): i=0,1, i\in Z_{10}\}$.
\end{itemize}
\end{proof}

Define the following set of triples:\\
$I_1 =\{(48-3x,0,2x), x=0,1,,\ldots, 16\}$, $I_2 =\{(44-3x,3,2x), x=0,1,,\ldots, 14\}$,\\
$I_3 =\{(36-3x,9,2x), x=0,1,,\ldots, 12\}$, $I_4 =\{(32-3x,12,2x), x=0,1,,\ldots, 10\}$,\\
$I_5 =\{(24-3x,18,2x), x=0,1,,\ldots, 8\}$, $I_6 =\{(20-3x,21,2x), x=0,1,,\ldots, 6\}$,\\
$I_7 =\{(0,36,0)\}$, $I_8 =\{(4,33,0), (1,33,2)\}$.

\begin{lemma}
\label{lemmaD2bis} There exists a \/ $(P_2, P_3, P_{4})$-URGDD$(r,s,t)$ of type $12^5$,
for every $(r,s,t)\in \cup_{j=1}^{8}I_{j}$.
\end{lemma}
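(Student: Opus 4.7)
My plan is to treat the sets $I_1,\ldots,I_6$ with a single expand-and-fill construction based on Lemma~\ref{lemmaE2}, and to handle $I_7$ and $I_8$ separately by direct constructions.

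Fix $(s_0,t_0)\in\{(8,0),(6,1),(4,2)\}$ and let $\cB$ denote the corresponding $(P_2,C_5)$-URGDD of type $2^5$ given by Lemma~\ref{lemmaE2}. I would expand each of the $10$ base points by a factor of $6$, so that the five base groups of size $2$ become five groups of size $12$. Each edge of a base $1$-factor lifts to a copy of $K_{6,6}$ on $12$ new vertices, and the five $K_{6,6}$'s arising from one base $1$-factor cover all $60$ vertices on pairwise disjoint vertex sets. I then decompose each of these five copies by the \emph{same} $(P_2,P_3,P_4)$-URGDD of type $6^2$ from Lemma~\ref{lemmaA1}, with its triple chosen from $\{(6,0,0),(3,0,2),(0,0,4),(2,3,0)\}$; because all five copies share the decomposition type, the $i$-th classes glue across the five copies into parallel classes on the full $60$-point vertex set. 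For each base $C_5$-class (two $5$-cycles on disjoint $5$-vertex subsets), each $C_5$ lifts to a copy of $C_{5(6)}$ on $30$ vertices, and by Lemma~\ref{lemmaD601} each $C_{5(6)}$ is resolvably decomposed into nine parallel $P_3$-classes; stacking the $i$-th class from the two copies yields nine parallel $P_3$-classes on all $60$ vertices, so every base $C_5$-class contributes $(0,9,0)$.

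The reachable parameter triples are therefore the elements of $\bigl(s_0\ast\{(6,0,0),(3,0,2),(0,0,4),(2,3,0)\}\bigr)+t_0\cdot(0,9,0)$. A brief analysis using the identity $6r+8s+9t=288$, which is forced for any $(P_2,P_3,P_4)$-URGDD of type $12^5$, shows that the base $(8,0)$ realises all of $I_1\cup I_2$, that the base $(6,1)$ realises all of $I_3\cup I_4$ (the mandatory $(0,9,0)$ from the $C_5$-class is exactly what extends the range to the endpoints $x=12$ of $I_3$ and $x=10$ of $I_4$, which the base $(8,0)$ cannot reach), and that the base $(4,2)$ realises all of $I_5\cup I_6$ (the two $C_5$-classes contribute $(0,18,0)$, necessary for the endpoints $x=8$ of $I_5$ and $x=6$ of $I_6$).

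The main obstacle is the treatment of $I_7=\{(0,36,0)\}$ and $I_8=\{(4,33,0),(1,33,2)\}$, since the expand-and-fill approach cannot exceed $s=30$. For $I_7$ I would give a direct construction of a $P_3$-RGDD of type $12^5$, in the style of Lemma~\ref{lemmaD2}: work over $\mathbb{Z}_{60}$ with groups $5\mathbb{Z}_{60}+j$, $j=0,1,2,3,4$, and list a short set of base $P_3$-classes to be developed under a suitable cyclic action, checking that the $36$ resulting classes partition the edges of $K_{12,12,12,12,12}$. For $I_8$ I would either provide two further explicit constructions along the same lines, or start from the $I_7$ design and swap three $P_3$-classes, chosen so that their edge union is a $4$-regular subgraph of $K_{12,12,12,12,12}$, with respectively four $1$-factors (yielding $(4,33,0)$) or one $1$-factor together with two $P_4$-classes (yielding $(1,33,2)$). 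The delicate step is the production of explicit blocks: the three $P_3$-classes to be swapped must have a union admitting the required re-decomposition, and the existence of the $P_3$-RGDD itself, although plausible from the known spectrum for uniform-type $P_3$-RGDDs, should be verified by an ad hoc listing of classes as in Lemmas~\ref{lemmaD1}--\ref{lemmaD2}.
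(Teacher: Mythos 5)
Your handling of $I_1\cup\cdots\cup I_6$ is correct and is essentially the paper's own construction: expand the $(P_2,C_5)$-URGDD of type $2^5$ of Lemma~\ref{lemmaE2} by a factor of $6$, fill each lifted $1$-factor uniformly with a type $6^2$ ingredient from Lemma~\ref{lemmaA1} and each lifted $C_5$-class with the resolvable $P_3$-decomposition of $C_{5(6)}$ from Lemma~\ref{lemmaD601}, and observe that the reachable sets $A_1=8\ast\{(6,0,0),(3,0,2),(0,0,4),(2,3,0)\}$, $A_2=6\ast\{\cdot\}+(0,9,0)$, $A_3=4\ast\{\cdot\}+(0,18,0)$ contain $I_1\cup I_2$, $I_3\cup I_4$, $I_5\cup I_6$ respectively. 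That part needs no changes.

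The gap is in $I_7$ and $I_8$: you do not actually produce these three designs, you only describe what an ad hoc construction would have to achieve, and the fallback you sketch is flawed. A $P_3$-parallel class on $60$ points is not a regular graph (it has $20$ vertices of degree $2$ and $40$ of degree $1$), so the union of three $P_3$-classes is $4$-regular only if every vertex is a path-centre in exactly one of the three classes; nothing guarantees that such a triple of classes exists in whatever $P_3$-RGDD you construct, and even if it did you would still have to show that this particular $4$-regular graph splits into four $1$-factors, respectively into one $1$-factor and two $P_4$-factors. So as written the cases $(0,36,0)$, $(4,33,0)$, $(1,33,2)$ remain unproved. The paper avoids all of this with a second recursive construction: take a $3$-RGDD of type $3^5$ (which has six parallel classes), expand each point $4$ times, and on the blocks of each lifted parallel class place a $(P_2,P_3,P_4)$-URGDD of type $4^3$ with parameters in $\{(0,6,0),(4,3,0),(1,3,2)\}$ from Lemma~\ref{lemmaA11}. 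This yields every triple in $6\ast\{(0,6,0),(4,3,0),(1,3,2)\}$, and in particular $(0,36,0)=6(0,6,0)$, $(4,33,0)=5(0,6,0)+(4,3,0)$ and $(1,33,2)=5(0,6,0)+(1,3,2)$; the key point you are missing is the existence of the all-$P_3$ ingredient $(0,6,0)$ of type $4^3$, which is what lets $s$ climb to $36$ without any hand-built $P_3$-RGDD of type $12^5$.
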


\begin{proof}
Start from a $(P_2, C_{5})$-URGDD$(s,t)$ of type\/ $2^{5}$ \cite{RS}, with
$(s,t)\in \{(8,0),(6,1),$ $(4,2)\}$, which exists by Lemma \ref{lemmaE2}, expand each point $6$ times and for every block $b$ of a parallel class place on $b\times \{1,2,3,4,5,6\}$ a copy of a 
 \/ $(P_2, P_3, P_{4})$-URGDD$(r,s,t)$ of type\/ $6^{2}$
with $(r_1,s_1,t_1)\in \{(6,0,0), (3,0,2), (0,0,4), (2,3,0)\}$, from Lemma \ref{lemmaA1}, or, as the case may be, a copy of a resolvable $P_3$-decomposition of $C_{5(6)}$ from Lemma \ref{lemmaD601}. This gives a \/ $(P_2, P_3, P_{4})$-URGDD$(r,s,t)$ of type $12^5$ with
$(r,s,t)\in A_1\cup A_2\cup A_3$, where  $ A_1=8\ast\{(6,0,0), (3,0,2), (0,0,4), (2,3,0)\}$, $ A_2=6\ast\{(6,0,0), (3,0,2), (0,0,4),$ $ (2,3,0)\}+(0,9,0)$, $ A_3=4\ast\{(6,0,0), (3,0,2), (0,0,4), (2,3,0)\}+(0,18,0)$, and since  $I_i\cup I_{i+1}\subseteq A_{\frac{i+1}{2}}$, $i=1,3,5$, $(r,s,t)\in \cup_{j=1}^{6}I_{j}$.\\
To obtain the remaining triples in $I_7\cup I_8$,  start from  a 3-RGDD of type $3^{5}$, expand each point  $4$ times and
for every block $b$ of a given parallel class place on $b\times \{1,2,3,4\}$ a copy of a  $(P_2, P_3, P_{4})$-URGDD$(r_1,s_1,t_1)$ of type\/ $4^{3}$,
with $(r_1,s_1,t_1)\in \{(0,6,0), (4,3,0),(1,3,2)\}$,  which exists by Lemma \ref{lemmaA11}. The result is a \/ $(P_2, P_3, P_{4})$-URGDD$(r,s,t)$ of type $12^5$ with  $(r,s,t)\in 6\ast\{(0,6,0), (4,3,0),(1,3,2)\}\supseteq I_i$, for $i=7,8,$ and this completes the proof.\end{proof}


\begin{lemma}
\label{lemmaD4} There exists a \/ $(P_2, P_3, P_{4})$-URGDD$(0,30,0)$ of type $20^{3}$.
\end{lemma}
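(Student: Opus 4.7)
The plan is to obtain the URGDD by a standard Wilson-type inflation. Specifically, I would start from a $3$-RGDD of type $5^3$ (equivalently, a resolvable transversal design RTD$(3,5)$, which exists since $3$ MOLS of order $5$ exist). Such a design has $3$ groups of size $5$, a total of $25$ triangular blocks, and exactly $\frac{5\cdot 2}{3-1}=5$ parallel classes, each consisting of $5$ pairwise vertex-disjoint triples that form a transversal of the three groups.

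Next, I would inflate by $4$: replace every point by $4$ new points, so the $3$ groups become groups of size $20$. For each triple $b=\{x,y,z\}$ of the $3$-RGDD I place on $b\times\{1,2,3,4\}$ a copy of the $(P_2,P_3,P_4)$-URGDD$(0,6,0)$ of type $4^3$ (i.e.\ a $P_3$-RGDD of type $4^3$), which is guaranteed by the $(0,6,0)$ case of Lemma~\ref{lemmaA11}. This produces $6$ parallel classes of $P_3$'s inside every inflated $K_{4,4,4}$.

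Because the five triples in a single parallel class of the underlying $3$-RGDD are vertex-disjoint, the five inflated $K_{4,4,4}$'s obtained from them are vertex-disjoint as well. I can therefore pick the $j$-th $P_3$-parallel class inside each of these five $K_{4,4,4}$'s ($j=1,\ldots,6$) and concatenate them into a single $P_3$-parallel class on the full $60$-point set. Doing this for each of the $5$ parallel classes of the base $3$-RGDD yields $5\cdot 6=30$ parallel classes of $P_3$ in total. An edge count confirms correctness: the $25$ inflated blocks contribute $25\cdot 48=1200$ edges, which is exactly $|E(K_{20,20,20})|$, and each edge is covered exactly once by the $P_3$-RGDDs placed on the inflated blocks.

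The only thing to be careful about is the lifting of resolvability, namely that pairing up the $j$-th factor from each of the five vertex-disjoint $K_{4,4,4}$'s produces a genuine parallel class on all $60$ vertices. This is immediate from the vertex-disjointness inside each base parallel class, so there is no real obstacle. Hence a $(P_2,P_3,P_4)$-URGDD$(0,30,0)$ of type $20^3$ exists, as required.
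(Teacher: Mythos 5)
Your proof is correct and follows essentially the same route as the paper: inflate a $3$-RGDD of type $5^3$ by $4$ and place the $(0,6,0)$ URGDD of type $4^3$ from Lemma~\ref{lemmaA11} on each inflated block, combining factors across each base parallel class to get $5\cdot 6=30$ classes of $P_3$. Your citation of Lemma~\ref{lemmaA11} is in fact more accurate than the paper's own reference (which points to Lemma~\ref{lemmaA2}, evidently a typo).
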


\begin{proof}
Start from  a 3-RGDD of type $5^{3}$, expand every point $4$ times and for every block $b$ of a given parallel class place on $b\times \{1,2,3,4\}$ a copy of a  $(P_2, P_3, P_{4})$-URGDD$(r_1,s_1,t_1)$ of type\/ $4^{3}$,
with $(r_1,s_1,t_1)\in \{(0,6,0), (4,3,0),(1,3,2)\}$,  which exists by Lemma \ref{lemmaA2}.
 The result is a \/ $(P_2, P_3, P_{4})$-URGDD$(r,s,t)$ of type $20^{3}$ with  $(r,s,t)\in 5\ast\{(0,6,0), (4,3,0),(1,3,2)\}\supseteq \{(0,30,0)\}$.
\end{proof}

\begin{lemma}
\label{lemmaD60} URD-$(60;P_2, P_3, P_4)\supseteq D(60)$.
\end{lemma}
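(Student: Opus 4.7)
The plan is to cover $D(60)$ by combining two applications of Theorem~\ref{thmR2} with an appeal to the known spectrum of $(P_3, P_4)$-URDs of $K_v$ quoted in Section~\ref{introduzione}.

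\textbf{Step 1 (type $12^5$).} I apply Theorem~\ref{thmR2} with $g=1$, $t=12$, $u=k=5$, using the trivial $5$-RGDD of type $1^5$ (that is, $K_5$ viewed as a single parallel class), the $(P_2, P_3, P_4)$-URGDDs of type $12^5$ from Lemma~\ref{lemmaD2bis} with parameters in $J_1=I_1\cup\cdots\cup I_8$, and the $(P_2, P_3, P_4)$-URD$(12;\cdot)$'s from Lemma~\ref{lemmaA2} with parameters in $J_2=D(12)$. Because the $5$-RGDD has $h=1$ parallel class, the theorem produces a $(P_2, P_3, P_4)$-URD$(60;r,s,t)$ for every $(r,s,t)\in D(12)+(I_1\cup\cdots\cup I_8)$.

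\textbf{Step 2 (type $20^3$).} I apply Theorem~\ref{thmR2} again, this time with $g=1$, $t=20$, $u=k=3$, using the trivial $3$-RGDD $K_3$ of type $1^3$, the URGDD$(0,30,0)$ of type $20^3$ given by Lemma~\ref{lemmaD4}, and the URD$(20;r,0,t)$'s from Lemma~\ref{lemmaA4} with parameters in $D(20)=\{(19-3x,0,2x):0\le x\le 6\}$. This yields a $(P_2, P_3, P_4)$-URD$(60;r,s,t)$ for every $(r,s,t)\in\{(0,30,0)\}+D(20)=\{(19-3x,30,2x):0\le x\le 6\}=D_{10}(60)$.

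\textbf{Step 3 and verification.} A routine inspection, matching the $y$-range of each $D_i(60)$ with the $y$-ranges supplied by the sums $I_j+D_k(12)$ of Step~1, shows that Steps~1--2 cover all of $D(60)$ except the single triple $(0,33,10)\in D_{11}(60)$. Concretely, for $i\in\{0,1,2\}$ one uses $I_1+D_i(12)$; for $i=3,4,5$, $I_3+D_{i-3}(12)$; for $i=6,7,8$, $I_5+D_{i-6}(12)$; for $i=9$, $I_6+D_2(12)$; for $i=10$, Step~2; for $i=11$, the set $I_8+D_0(12)$ recovers $(15-3y,33,2y)$ for $0\le y\le 4$; and for $i=12,13,14$, $I_7+D_{i-12}(12)$. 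The leftover triple $(0,33,10)$ has $r=0$, so it is precisely a $(P_3,P_4)$-URD$(60;33,10)$; taking $v=60$ and $x=3$ in the spectrum recalled in the introduction, namely $(6+9x,\,2+2(v-12)/3-8x)=(33,10)$, such a decomposition is already known to exist, which completes the proof.

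The key obstacle is exactly this third step. Every URGDD of type $12^5$ made available by Lemma~\ref{lemmaD2bis} with $s_u=33$ has $t_u\le 2$, and no URGDD of type $20^3$ with $s_u=33$ can exist (the edge-count equation $6r_u+8s_u+9t_u=240$ gives $6r_u+9t_u<0$), so $(0,33,10)$ is not reachable by adding any element of $D(12)$ or $D(20)$ to the GDD-based building blocks used everywhere else in the paper. The decisive idea is to leave the GDD framework and recognize $(0,33,10)$ as a pure $(P_3,P_4)$-URD, which is already in the spectrum of Yu and Milici.
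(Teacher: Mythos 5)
Your proposal is correct and takes essentially the same approach as the paper: the published proof likewise fills the groups of the type $12^5$ URGDDs of Lemma \ref{lemmaD2bis} with URD$(12;\cdot)$'s to cover every $D_i(60)$ except $D_{10}(60)$ and the single triple $(0,33,10)$, handles $D_{10}(60)$ by filling the type $20^3$ URGDD$(0,30,0)$ of Lemma \ref{lemmaD4} with URD$(20;\cdot)$'s, and disposes of $(0,33,10)$ by citing the known $(P_3,P_4)$-URD$(60;33,10)$. The only (immaterial) differences are that you formalize the group-filling step as Theorem \ref{thmR2} applied to a trivial RGDD and that you cover $D_3(60)$ via $I_3+D_0(12)$ where the paper uses $I_2+D_2(12)$; both sums equal $D_3(60)$.
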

\begin{proof}
The case $(0,33,10)$ corresponds to a $(P_2, P_3, P_4)$-URD$(60;0,33,10)$  which is known to exist \cite{GM1}.
 For $D_{10}(60)$, start from  a \/ $(P_2, P_3, P_{4})$-URGDD$(0,30,0)$ of type $20^{3}$, which exists by Lemma  \ref{lemmaD4}, and fill in each group with a copy of a $(P_2, P_3, P_4)$-URD$(20;r_2,0,t_2)$ from Lemma \ref{lemmaA4} so to obtain the set of triples
$I_9+D(20)$=$D_{10}(60)$.

For all the other cases, start from  a \/ $(P_2, P_3, P_{4})$-URGDD$(r_1,s_1,t_1)$ of type $12^{5}$ from Lemma  \ref{lemmaD2bis} and fill in each group with a copy of a $(P_2, P_3, P_4)$-URD$(12;r_2,s_2,t_2)$ from Lemma \ref{lemmaA2}
so to obtain the following triples:\\

$I_1+D_i(12)$=$D_i(60)$, $i=0,1,2$, 

$I_2+D_2(12)$= $D_3(60)$, $I_3+D_1(12)$= $D_4(60)$, $I_3+D_2(12)$= $D_5(60)$,

$I_5+D_i(12)$=$D_{i+6}(60)$, $i=0,1,2$, 

$I_6+D_2(12)$= $D_9(60)$, $I_8+D_0(12)$= $D_{11}-\{(0,33,10)\}$,

$I_7+D_i(12)$=$D_{i+12}(60)$, $i=0,1,2$.
\end{proof}

\section{The case $v\equiv 0\pmod{24}$}

\begin{lemma}
\label{lemmaC2} For every \ $v\equiv 0\pmod{24}$, $
URD(v;P_2,P_3,P_4) \supseteq D(v)$.
\end{lemma}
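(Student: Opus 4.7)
The plan is to invoke Theorem \ref{thmR2} with parameters $g=1$, $t=12$, $k=2$, and $u=v/12$. Writing $v=24m$, the assumption $v\equiv 0\pmod{24}$ forces $u=2m$ to be even, so a $2$-RGDD of type $1^{u}$ (equivalently, a $1$-factorization of $K_{u}$) exists. The remaining ingredients are supplied by earlier results: a $(P_2,P_3,P_4)$-URGDD of type $12^{2}$ realizing every triple in $\bar{D}(12^{2})$ exists by Lemma \ref{lemmaD1}, while a $(P_2,P_3,P_4)$-URD$(12;r_2,s_2,t_2)$ realizing every triple in $D(12)$ exists by Lemma \ref{lemmaA2}. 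The $2$-RGDD contributes $u-1=2m-1$ parallel classes, so Theorem \ref{thmR2} delivers
$$URD(v;P_2,P_3,P_4)\ \supseteq\ D(12)+(2m-1)\ast\bar{D}(12^{2}).$$

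Applying Lemma \ref{lemmaF1} with $h=2m-1$ collapses the repeated sum to $\bar{D}(12^{2m})$, so the lemma reduces to the set-theoretic identity
$$D(12)+\bar{D}(12^{2m})\ =\ D(24m).$$
I would prove this by a direct computation mirroring the argument used for $v=36$ in Lemma \ref{lemmaD36}: decompose $D(12)=D_0(12)\cup D_1(12)\cup D_2(12)$ and $\bar{D}(12^{2m})=\bigcup_{j=0}^{6m-3}\bar{D}_j(12^{2m})$, then verify the two families of identities $\bar{D}_0(12^{2m})+D_i(12)=D_i(24m)$ for $i=0,1$, and $\bar{D}_j(12^{2m})+D_2(12)=D_{j+2}(24m)$ for $j=0,1,\ldots,6m-3$. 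Since $j+2$ sweeps the indices $2,\ldots,6m-1$, taking the union covers $\bigcup_{i=0}^{6m-1}D_i(24m)=D(24m)$.

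The main obstacle is purely bookkeeping in this last step: one must match the range of the common parameter $y$ in each summand against the piecewise-modulo-$3$ definition of $r(24m,\cdot)$. The arithmetic identity $24m-1=11+12(2m-1)$ aligns leading coefficients, while a direct residue-class analysis yields $\bar{r}(2m-1,j)+r(12,2)=r(24m,j+2)$ (and analogous identities for the $i=0,1$ cases), giving exactly the $y$-ranges needed. This is the same mechanism already carried out in Lemma \ref{lemmaD36}, just with $\bar{D}(12^{2m})$ in place of $\bar{D}(12^{3})$.
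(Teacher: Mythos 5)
Your proposal is correct and follows essentially the same route as the paper: apply Theorem \ref{thmR2} to a $2$-RGDD of type $1^{v/12}$ with the type-$12^2$ URGDDs of Lemma \ref{lemmaD1} and the URD$(12)$'s of Lemma \ref{lemmaA2}, then verify the index-shifting identities $\bar{D}_j+D_2(12)=D_{j+2}(v)$ and $\bar{D}_0+D_i(12)=D_i(v)$. The only cosmetic difference is that you collapse $(2m-1)\ast\bar{D}(12^2)$ to $\bar{D}(12^{2m})$ via Lemma \ref{lemmaF1} where the paper writes the equal set $\bar{D}((v-12)^2)$.
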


\begin{proof}
Start from a $2$-RGDD of type $1^{\frac{v}{12}}$  \cite{CD} and apply Theorem \ref{thmR2} with $t=12, g=1, u= \frac{v}{12}$
(the input designs are a $(P_2, P_3,P_4)$-URGDD$(r_1, s_1,t_1)$ of type $12^{2}$  with $(r_1,s_1,t_1)\in \bar{D}(12^2)$, which exists by  Lemma \ref{lemmaD1}, and a $(P_2, P_3,P_4)$-URD$(12;r_2,s_2,t_2)$ with $(r_2,s_2,t_2)\in D(12)$, which exists by  Lemma \ref{lemmaA2}). This implies
$$ URD(v; P_2, P_3, P_4)\supseteq D(12)+\frac{v-12}{12}\ast D(12^{2})$$.
Since 
$$D(12)= D_0(12)\cup D_1(12)\cup D_2(12)$$
 and
$$\frac{v-12}{12}\ast  \bar{D}(12^{2})= \bar{D}((v-12)^{2})= \bar{D}_0((v-12)^{2})\cup  \bar{D}_1((v-12)^{2})\cup \ldots \cup  \bar{D}_{\frac{v-12}{4}}((v-12)^{2}),$$
and, as it is easy to see, for each $j=0,1,\ldots,\frac{v-12}{4}$
$$  \bar{D}_{j}((v-12)^{2})+D_2(12)=D _{j+2}(v) $$
and, for each $i=0,1$,
$$  \bar{D}_{0}((v-12)^{2})+D_{i}(12)=D _{i}(v), $$
we obtain
$$D(12)+\frac{v-12}{12}\ast D(12^{2})=\bigcup_{j=0}^{\frac{v-4}{4}}D _{j}(v)= D(v).$$
\end{proof}

\section{The case $v\equiv 12\pmod{24}$}

\begin{lemma}
\label{lemmaF2} For every \ $v\equiv 12\pmod{24}$, $D(v)\subseteq
URD(v;P_2,P_3,P_4)$.
\end{lemma}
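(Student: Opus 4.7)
The plan is to mirror the proof of Lemma \ref{lemmaC2} but replace Theorem \ref{thmR2} by the frame-with-hole construction of Theorem \ref{thmR3}. Lemma \ref{lemmaC2} built its URDs from a $2$-RGDD of type $1^{v/12}$, which requires $v/12$ to be even; since $v \equiv 12 \pmod{24}$ forces $v/12$ to be odd, that particular RGDD is unavailable and a hole of size $12$ has to absorb the parity obstruction. The two small values $v=36$ and $v=60$ are already covered by Lemmas \ref{lemmaD36} and \ref{lemmaD60}, so I would reduce to $v \geq 84$ and write $v = 24u + 12$ with $u \geq 3$.

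For such $u$ I would apply Theorem \ref{thmR3} with $g=2$, $t=12$, $h=12$, fed by a $2$-frame of type $2^u$. The three ingredient families are already available in the paper: URD$(12;r_1,s_1,t_1)$ with $(r_1,s_1,t_1)\in D(12)$ from Lemma \ref{lemmaA2}, URGDD$(r_2,s_2,t_2)$ of type $12^2$ with $(r_2,s_2,t_2)\in \bar{D}(12^2)$ from Lemma \ref{lemmaD1}, and IURD$(36,12;[r_1,s_1,t_1],[r_3,s_3,t_3])$ with $(r_1,s_1,t_1)\in D(12)$ and $(r_3,s_3,t_3)\in 2\ast\bar{D}(12^2)$ from Lemma \ref{lemmaD5}. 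Since $g\ast J_2 = 2\ast\bar{D}(12^2)$, this matches Lemma \ref{lemmaD5} exactly, and Theorem \ref{thmR3} then outputs $URD(v;P_2,P_3,P_4)\supseteq D(12)+u\ast(2\ast\bar{D}(12^2))$.

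The finishing step is purely set-theoretic. Using the trivial identity $u\ast(2\ast X) = (2u)\ast X$ together with Lemma \ref{lemmaC1}, one gets $u\ast(2\ast\bar{D}(12^2)) = (2u)\ast \bar{D}(12^2) = \bar{D}((24u)^2) = \bar{D}((v-12)^2)$, so $URD(v;P_2,P_3,P_4)\supseteq D(12)+\bar{D}((v-12)^2)$. I would then invoke the same row-by-row identities used at the end of Lemma \ref{lemmaC2}, namely $\bar{D}_0((v-12)^2)+D_i(12)=D_i(v)$ for $i=0,1$ and $\bar{D}_j((v-12)^2)+D_2(12)=D_{j+2}(v)$ for $0\leq j\leq (v-12)/4$; these are algebraic in $v$ and depend only on $v\equiv 0\pmod{12}$, so they transfer unchanged and give $D(12)+\bar{D}((v-12)^2)=D(v)$.

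The main potential obstacle is confirming the existence of a $2$-frame of type $2^u$ for every $u\geq 3$. The paper does not construct these frames explicitly, so this step has to cite \cite{CD}; once the frame is in hand, no new small-case URGDDs are needed and the rest of the proof is a routine reassembly of the machinery already developed in Sections 3 and 4.
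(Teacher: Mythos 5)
Your proof follows the paper's argument for this lemma essentially step for step: dispose of the small cases, then feed a $2$-frame into Theorem \ref{thmR3} with the same three ingredients (Lemmas \ref{lemmaA2}, \ref{lemmaD1}, \ref{lemmaD5}) and finish with the same algebraic identity $D(12)+\frac{v-12}{24}\ast\left(2\ast\bar{D}(12^{2})\right)=D(v)$. The only divergence is the frame: the paper states a $2$-frame of type $6^{\frac{v-12}{24}}$ with $g=6$, $t=12$, $h=12$, parameters which as printed do not satisfy $v=gtu+h$ and do not match the cited IURD$(36,12)$, whereas your choice $g=2$, $t=12$ with a $2$-frame of type $2^{\frac{v-12}{24}}$ is the internally consistent reading, since it gives $gt+h=36$ and $J_3=g\ast J_2=2\ast\bar{D}(12^{2})$ exactly as Lemma \ref{lemmaD5} supplies. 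One small omission: the congruence class $v\equiv 12\pmod{24}$ also contains $v=12$ itself, which you should list alongside $36$ and $60$ (it is covered by Lemma \ref{lemmaA2}).
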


\begin{proof}
The cases $v=12,36,60$ are covered by Lemmas\ref{lemmaA2}, \ref{lemmaD36} and \ref{lemmaD60}.  For $v>60$  start from a $2$-frame of type $6^{\frac{v-12}{24}}$ (\cite{S2}) and apply Theorem \ref{thmR3} with $g=6, u= \frac{v-12}{24}, t=12$ and $h=12$
(the input designs are: a $(P_2, P_3,P_4)$-URD$(12;r_1,s_1,t_1)$ with $(r_1,s_1,t_1)\in D(12)$, which exists by  Lemma \ref{lemmaA2};  a $(P_2, P_3,P_4)$-URGDD$(r_2,s_2,t_2)$ of type $12^2$ with $(r_2,s_2,t_2)\in\bar{D}(12^2)$, which exists by  Lemma \ref{lemmaD1}; a   $(P_2, P_3,P_4)$-IURD$(36,12;[r_1,s_1,t_1],[r_3,s_3,t_3])$ with
 $(r_1,s_1,t_1)\in D(12)$ and $(r_3,s_3,t_3) \in 2\ast \bar{D}(12^{2})$, which exists by  Lemma \ref{lemmaD5}). This implies
$$ URD(v; P_2, P_3, P_4)\supseteq D(12)+ \frac{v-12}{24}\ast\left(2\ast \bar{D}(12^{2})\right).$$
Since $$D(12)= D_0(12)\cup D_1(12)\cup D_2(12)$$
 and

\bigskip

$\frac{v-12}{24}\ast\left( 2\ast \bar{D}(12^{2}) \right)=$ 

\bigskip

$=\bar{D}\left((v-12)^{2}\right)\cup \bar{D}_0\left((v-12)^{2}\right)\cup \bar{D}_1\left((v-12)^{2}\right)\cup \ldots \cup \bar{D}_{\frac{v-12}{4}}\left((v-12)^{2}\right)$.

\bigskip

\noindent and, as it is easy to see, for each $j=0,1,\ldots,\frac{v-12}{4}$
$$ \bar{D}_{j}\left((v-12)^2\right)+D_2(12)=D _{j+2}(v) $$
and, for each $i=0,1$,
$$ \bar{D}_{0}\left((v-12)^2\right)+D_{i}(12)=D _{i}(v), $$
we obtain
$$D(12)+\frac{v-12}{24}\ast\left( 2\ast \bar{D}(12^{2} \right)=\bigcup_{j=0}^{\frac{v-4}{4}}D _{j}(v)= D(v).$$
\end{proof}

\section{Conclusion}

We are now in a position to prove the following main result.

\begin{thm}
For every \ $v\equiv 0\pmod{12}$,
\/ $URD(v;P_2, P_3, P_4)=D(v)$.
\end{thm}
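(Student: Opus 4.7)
The plan is to verify the equality by two inclusions. For the inclusion $URD(v;P_2,P_3,P_4)\subseteq D(v)$, note that when both $s>0$ and $t>0$ this is exactly the content of Lemma \ref{lemmaP0}, whose necessary-conditions argument (counting edges with the resolvability relation $6r+8s+9t=6(v-1)$, then forcing $s\equiv 0\pmod 3$ and $t\equiv 0\pmod 2$) applies verbatim. When $s=0$ or $t=0$, Lemmas \ref{lemmaP1} and \ref{lemmaP2} give that the admissible parameter triples lie in $D_0(v)$, which is a subset of $D(v)$. The degenerate case $(r,s,t)=(v-1,0,0)$ is the ordinary $1$-factorization of $K_v$, which exists since $v\equiv 0\pmod{12}$ forces $v$ to be even.

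For the reverse inclusion $D(v)\subseteq URD(v;P_2,P_3,P_4)$, the plan is to split on the residue of $v$ modulo $24$. Since $v\equiv 0\pmod{12}$, we have either $v\equiv 0\pmod{24}$ or $v\equiv 12\pmod{24}$, and these two cases are exhaustive. In the first case, Lemma \ref{lemmaC2} gives $D(v)\subseteq URD(v;P_2,P_3,P_4)$ directly, by applying Theorem \ref{thmR2} to a $2$-RGDD of type $1^{v/12}$ with the ingredient $(P_2,P_3,P_4)$-URGDDs of type $12^2$ supplied by Lemma \ref{lemmaD1} and the $(P_2,P_3,P_4)$-URD$(12;\cdot,\cdot,\cdot)$ supplied by Lemma \ref{lemmaA2}. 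In the second case, Lemma \ref{lemmaF2} provides the same conclusion, handling $v\in\{12,36,60\}$ by Lemmas \ref{lemmaA2}, \ref{lemmaD36}, \ref{lemmaD60} and the remaining $v>60$ by Theorem \ref{thmR3} applied to a $2$-frame of type $6^{(v-12)/24}$, using the hole-filling IURD given in Lemma \ref{lemmaD5}.

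Concatenating the two inclusions yields $URD(v;P_2,P_3,P_4)=D(v)$ for every $v\equiv 0\pmod{12}$. The only real obstacle in this concluding argument is bookkeeping: one must be certain that the set-arithmetic identities of the form $D(12)+\bar{D}((v-12)^2)=D(v)$ used inside Lemmas \ref{lemmaC2} and \ref{lemmaF2} actually cover \emph{every} triple of $D(v)$, not merely a subset. This was already verified in those lemmas by matching the pieces $\bar{D}_j((v-12)^2)+D_i(12)$ against $D_{j+i}(v)$ for the relevant indices $i\in\{0,1,2\}$, so here it suffices to invoke those lemmas; the final theorem itself is then obtained by the simple case split above.
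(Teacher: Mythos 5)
Your proposal is correct and takes essentially the same route as the paper: necessity via the counting argument of Lemma \ref{lemmaP0}, and sufficiency by splitting on $v\bmod 24$ and invoking Lemmas \ref{lemmaC2} and \ref{lemmaF2}. (One trivial bookkeeping slip: the $t=0$, $s=3x>0$ triples of Lemma \ref{lemmaP2} lie in $D_x(v)$ rather than $D_0(v)$, but they are still in $D(v)$, and the counting argument of Lemma \ref{lemmaP0} covers all cases anyway.)
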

\begin{proof}
Necessity follows by Lemmas \ref{lemmaP0}. 
Sufficiency follows by Lemmas  \ref{lemmaF2} and
\ref{lemmaC2}.
\end{proof}


\begin{thebibliography}{9}



\bibitem{BHY} J. C. Bermond, K. Heinrich and M.L. Yu, {\em Existence of resolvable path designs}, Europ. J. Combinatorics {\bf 11}
(1990), 205--211.


\bibitem {B} C. Berge, {\em Graphs and Hypergraphs}, Elsevier, 1973.

\bibitem{CD} C. J. Colbourn and J. H. Dinitz (eds.), {\em  Handbook of Combinatorial Designs}, Second Edition, Chapman and Hall/CRC, Boca Raton, FL, 2007.

\bibitem{URD} P. Danziger and E. Mendelsohn, {\em Uniformly Resolvable Designs}, J.
Combin. Math. Combin. Comput. {\bf 21} (1996), 65--83.

\bibitem{DQS}P. Danziger, G. Quattrocchi and B. Stevens, 2004. {\em The Hamilton-Waterloo Problem for Cycle Sizes $3$ and $4$}, J. Comb. Des. 12,
221--232.

\bibitem{DLD} J. H. Dinitz, A. C. H. Ling and P. Danziger,
 {\em Maximum Uniformly resolvable designs with  block sizes $2$ and $4$},
      Discrete Math. {\bf 309} (2009), 4716--4721.

\bibitem{GM} M. Gionfriddo and S. Milici, {\em On the existence of uniformly resolvable decompositions of\/ $K_v$ and\/ $K_v-I$ into paths and kites},
Discrete Math. {\bf 313} (2013), 2830--2834.

\bibitem{GM1} M. Gionfriddo and S. Milici, {\em Uniformly resolvable $\cH$-designs with $\cH$=$\{P_3, P_4\}$}, manuscript $(2014)$.


\bibitem{HR} P. Horak, R. Nedela and A. Rosa, {\em The Hamilton-Waterloo problem:
the case of Hamilton factors and triangle-factors}, Discrete Math.
{\bf 284} (2004) 181--188.

\bibitem{H} J. D. Horton, {\em Resolvable path designs}, J. Comb. Th. Ser. A {\bf 39} (1985) 117-131.

\bibitem{KMT} S. Kucukcifci, S. Milici and Zs. Tuza,
{\em Maximum uniformly resolvable decompositions of\/ $K_v$ into\/ $3$-stars and\/ $3$-cycles},
 to appear in Discrete Math..
 
\bibitem{KLMT} S. Kucukcifci, G. Lo Faro, S. Milici and A. Tripodi,
{\em Resolvable\/ $3$-star designs}, manuscript (2014).

\bibitem{M} S. Milici, {\em A note on uniformly resolvable decompositions of\/ $K_v$ and\/ $K_v-I$ into\/ $2$-stars and\/ $4$-cycles},  Austalas. J. Combin.,{\bf 56} (2013), 195--200.

\bibitem{M1} S. Milici, {\em Uniformly resolvable $\{K_2, P_k\}$-designs with $k$=$\{3,4\}$}, manuscript $(2014)$.

\bibitem{MT} S. Milici and Zs. Tuza,
 {\em Uniformly resolvable decompositions of\/ $K_v$ into\/ $P_3$ and\/ $K_3$ graphs}, Discrete Math.
{\bf 331} (2014) 137--141.


\bibitem{R} R. Rees, {\em Uniformly resolvable pairwise balanced
designs with block sizes two and three}, J. Comb. Th. Series A
{\bf 45} (1987) 207-225.

\bibitem{RS}  R. Rees and D. R. Stinson.
{\em On resolvable group divisible designs with block size $3$, Ars
Combin.} {\bf 23} (1987), 107-120.

\bibitem{S1}
 E. Schuster, {\em Uniformly resolvable designs with index one and block sizes
  three and four -- with three or five parallel classes of block size four},
 Discrete Math. {\bf 309} (2009), 2452--2465.
 
 \bibitem{S1bis}
 E. Schuster, {\em Small uniformly resolvable designs with block sizes
  $3$ and $4$},
J. Comb. Des.  {\bf 21} (2013), 481--523.

\bibitem{S2}
 E. Schuster, {\em Uniformly resolvable designs with index one, block sizes
 three and five and up to five parallel classes with blocks of size five},
Discrete Math. {\bf 309} (2009), 4435--4442.

\bibitem{SG}
 E. Schuster and G. Ge, {\em On uniformly resolvable designs with block sizes
  $3$ and $4$},
Des. Codes Cryptogr.  {\bf 57} (2010), 57--69.

\bibitem{U}  K. Ushio, {\em $P_3$-factorization of complete bipartite graphs}, Discrete Math. {\bf 72} (1988),
    361-366.

\bibitem{UT}  K. Ushio and R. Tsuruno, {\em $P_3$-factorization of complete multipartite graphs}, Graphs and Combinatorics {\bf 5} (1989),
    385-387.
    
    
\bibitem{Y} M.L. Yu, {\em Resolvable path designs of complete graphs}, Thesis B. Sc. Fudan University 1983, Ext. Dep. Simon Fraser University 1985.


\end{thebibliography}
\end{document}